\documentclass[10pt,reqno]{amsart}
\usepackage{amsmath}
\usepackage{amsthm}
\usepackage{amssymb}
\usepackage{amsfonts}
\usepackage{latexsym}
\usepackage{hyperref}
\usepackage{cite}



\newcommand{\C}{ \mathbb{C}}
\newcommand{\D}{ \mathbb{D}}
\newcommand{\dD}{ \partial\mathbb{D}}

\newcommand{\R}{ \mathbb{R}}

\renewcommand{\Re}{\operatorname{Re}}

\newcommand{\diag}{\operatorname{diag}}
\newcommand{\tr}{\operatorname{tr}}
\newcommand{\norm}[1]{\| #1 \|}
\newcommand{\inner}[1]{\langle #1 \rangle}

\newcommand{\K}{\mathcal{K}}
\newcommand{\minimatrix}[4]{\begin{pmatrix} #1 & #2 \\ #3 & #4 \end{pmatrix}  }
\newcommand{\megamatrix}[9]{\begin{pmatrix} #1 & #2 & #3 \\ #4 & #5 & #6 \\ #7 & #8 & #9\end{pmatrix}  }

\renewcommand{\vec}[1]{{\bf #1}}

\renewcommand{\phi}{\varphi}



\theoremstyle{plain}
\newtheorem{Theorem}{Theorem}

\newtheorem{Corollary}[Theorem]{Corollary}
\newtheorem{Lemma}[Theorem]{Lemma}
\theoremstyle{definition}

\newtheorem{Example}{Example}

\newtheorem{Question}{Question}

\numberwithin{Theorem}{section}
\allowdisplaybreaks



\begin{document}
\bibliographystyle{plain}

    \title[Unitary equivalence to a truncated Toeplitz operator]{Unitary equivalence to a truncated Toeplitz operator:  analytic symbols}

    \author{Stephan Ramon Garcia}
    \address{   Department of Mathematics\\
            Pomona College\\
            Claremont, California\\
            91711 \\ USA}
    \email{Stephan.Garcia@pomona.edu}
    \urladdr{http://pages.pomona.edu/\textasciitilde sg064747}

\author{Daniel E.~Poore}
	\email{dep02007@mymail.pomona.edu}

	\author{William T. Ross}
	\address{   Department of Mathematics and Computer Science\\
	            University of Richmond\\
	            Richmond, Virginia\\
	            23173 \\ USA}
	\email{wross@richmond.edu}
	\urladdr{http://facultystaff.richmond.edu/~wross}

    \keywords{Toeplitz operator, model space, truncated Toeplitz operator, reproducing kernel, complex symmetric operator,  conjugation,
    hyperbolic geometry, Euclid, Hilbert's axioms, pseudo-hyperbolic metric, hyperbolic metric, Poincar\'e model, trace.}
    \subjclass[2000]{47A05, 47B35, 47B99}

    \thanks{First author partially supported by National Science Foundation Grant DMS-1001614.}

    \begin{abstract}
	    Unlike Toeplitz operators on $H^2$, truncated Toeplitz operators do not have a natural
	    matricial characterization.  Consequently, these operators are difficult to study
	    numerically.  In this note we provide criteria for a matrix with distinct eigenvalues to be
	    unitarily equivalent to a truncated Toeplitz operator having an analytic symbol.  This test is constructive
	    and we illustrate it with several examples.  As a byproduct, we also prove that 
	    every complex symmetric operator on a Hilbert space of dimension $\leq 3$ is unitarily equivalent to a direct
	    sum of truncated Toeplitz operators.
    \end{abstract}

\maketitle

\section{Introduction}
	Interest in truncated Toeplitz operators has blossomed over the last several years
	\cite{BCFMT, MR2597679, TTOSIUES, CRW, NLEPHS, Sarason, MR2468883, MR2418122, Sed, STZ},
	sparked by a seminal paper of D.~Sarason \cite{Sarason}.
	Unfortunately, only the very simplest truncated Toeplitz operators, the finite Toeplitz matrices, have
	any sort of practical matricial description.  Thus the numerical study of truncated Toeplitz operators
	is difficult, even in low dimensions.
	For instance, while the theory of pseudospectra for Toeplitz matrices has recently undergone
	spirited development \cite[Ch.~II]{Trefethen}, many basic questions
	about truncated Toeplitz operators remain unanswered.
	For instance, finding a characterization of rank-two self-adjoint
	truncated Toeplitz operators is still an open problem \cite[p.~508]{Sarason}, despite
	the fact that all rank-one truncated Toeplitz operators have already been identified \cite[Thm.~5.1]{Sarason}.
	
	Although a few results concerning matrix representations of
	truncated Toeplitz operators have been obtained \cite{CRW, TTOSIUES, STZ}, the general question of 
	determining whether a given matrix represents a truncated Toeplitz operator, with respect to some orthonormal basis,
	appears difficult.   On the other hand, it is known that every truncated Toeplitz operator
	is unitarily equivalent to a complex symmetric matrix \cite{CSOA,CCO}, a somewhat more general issue
	which has been studied by several authors \cite{UECSMGC, UECSMMC, Tener, Vermeer}.  
	
	Our work is partly motivated by the question of whether truncated Toeplitz operators serve as some sort of 
	model for complex symmetric operators.  A significant amount of evidence has been produced in this 
	direction, starting with D.~Sarason's early observation that the Volterra integration operator is unitarily equivalent
	to a truncated Toeplitz operator \cite{MR0192355} (see also \cite[p.~41]{MR1892647}).  Since then,
	many other examples of complex symmetric operators which are representable in terms of 
	truncated Toeplitz operators have emerged.  For instance,
	normal operators \cite[Thm.~5.6]{TTOSIUES}, rank-one operators, \cite[Thm.~5.1]{TTOSIUES}, 
	$2 \times 2$ matrices \cite[Thm.~5.2]{TTOSIUES}, and inflations of finite Toeplitz matrices 
	\cite[Thm.~5.7]{TTOSIUES} are unitarily equivalent to truncated Toeplitz operators. The recent
	preprint \cite{STZ} contains a host of other examples.
	
	Before stating our main results, we briefly review some of the necessary preliminaries.
	Let $H^2$ denote the Hardy space on the open unit disk $\D$,
	let $H^{\infty}$ denote the Banach algebra of all bounded analytic functions on $\D$,
	and let $L^{\infty}:= L^{\infty}(\dD)$ and $L^2:= L^2(\dD)$ denote the
	usual Lebesgue spaces on the unit circle $\dD$ \cite{Duren, MR2261424}.
	To each nonconstant inner function $\Theta$ we associate the \emph{model space} 
	$\K_{\Theta} := H^2 \ominus \Theta H^2$,
	which is the reproducing kernel Hilbert space corresponding to the kernel
	\begin{equation*}
		K_{\lambda}(z)
		:= \frac{1 - \overline{\Theta(\lambda)} \Theta(z)}{1 - \overline{\lambda} z}, \quad z,\lambda \in \D.
	\end{equation*}
	For our purposes, we find it more convenient to work with the \emph{normalized kernels} 
	$k_{\lambda} := \sqrt{\frac{1-|\lambda|^2}{1 - |\Theta(\lambda)|^2} } K_{\lambda}$. 
	The space $\K_{\Theta}$ carries a natural \emph{conjugation} (an isometric, conjugate-linear, involution)
	\begin{equation}\label{eq-ModelConjugation}
		C f := \overline{ f z} \Theta,
	\end{equation}
	defined in terms of boundary functions \cite{CCO, CSOA, NLEPHS}.  The normalized \emph{conjugate kernels}
	$\tilde{k}_{\lambda} := Ck_{\lambda}$ are of particular interest to us.

	For each \emph{symbol} $\phi$ in $L^2$ the corresponding \emph{truncated Toeplitz operator}
	$A_{\phi}$ is the densely defined operator on $\K_{\Theta}$ given by
	\begin{equation} \label{eq-TTODefinition}
		A_{\phi} f := P_{\Theta}(\phi f).
	\end{equation}
	When we wish to be specific about the inner function $\Theta$, we often write
	$A^{\Theta}_{\phi}$ in place of $A_{\phi}$.
	The adjoint of $A_{\phi}$ is the operator $A_{\overline{\phi}}$ and it 
	is easy to see that $A_{\phi} = CA_{\phi}^* C$ where $C$ denotes the conjugation \eqref{eq-ModelConjugation}.
	In other words, $A_{\phi}$ is a \emph{complex symmetric operator} \cite{CCO,CSOA, G-P-II} and hence it can be represented as a complex 
	symmetric (i.e., self-transpose) matrix with respect to some orthonormal basis of $\K_{\Theta}$ \cite{CCO}
	(see also \cite[Sect.~5.2]{NLEPHS}).  	
	
	We are most interested in the case where $\phi \in H^{\infty}$ and $\Theta$ is a finite Blaschke product
	having distinct zeros $z_1,z_2,\ldots,z_n$.  In this case we have
	\begin{equation*}
		k_{z_i} = \frac{\sqrt{1-|z_i|^2} }{1 - \overline{z_i}z}, \qquad
		\tilde{k}_{z_i} = \frac{\sqrt{1-|z_i|^2}\Theta(z) }{z - z_i}.
	\end{equation*}
	For each $\phi$ in $H^{\infty}$ the
	eigenvalues of the \emph{analytic} truncated Toeplitz operator 
	$A_{\phi}^{\Theta}$ are given by $\phi(z_1), \phi(z_2),\ldots, \phi(z_n)$,
	with corresponding normalized eigenvectors $\tilde{k}_{z_1}, \tilde{k}_{z_2}, \ldots, \tilde{k}_{z_n}$ \cite[p.~10]{AMC}.
	In particular, nonzero eigenvectors of an analytic truncated Toeplitz operator are never orthogonal to each other.
	
	One of the main results of this note is the following simple criterion  for determining
	whether or not a given matrix is unitarily equivalent to a trunctated
	Toeplitz operator having an analytic symbol.  

	\begin{Theorem}\label{TheoremMain}
		If $M \in {\bf M}_n(\C)$ has distinct eigenvalues $\lambda_1, \lambda_2, \ldots, \lambda_n$ with corresponding
		unit eigenvectors $\vec{x}_1,\vec{x}_2,\ldots,\vec{x}_n$, then
		$M$ is unitarily equivalent to an analytic truncated Toeplitz operator 
		if and only if there exist distinct points $z_1,z_2,\ldots,z_{n-1}$ in $\D$ such that 
		\begin{equation}\label{eq-TripleExplicit}
			\inner{\vec{x}_n,\vec{x}_i}\inner{\vec{x}_i,\vec{x}_j} \inner{\vec{x}_j,\vec{x}_n} 
			= \frac{ (1 - |z_i|^2)(1-|z_j|^2) }{ 1 - \overline{z_j}z_i}
		\end{equation}
		holds for $1 \leq i \leq j <n$.
	\end{Theorem}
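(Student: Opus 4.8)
The plan is to reduce the whole question to a single unitary invariant for operators with simple spectrum. The fact I would use is elementary: if $T$ and $S$ act on an $n$-dimensional Hilbert space, each having $n$ distinct eigenvalues with unit eigenvectors $v_1,\dots,v_n$ and $w_1,\dots,w_n$ listed so that the $i$-th eigenvalue of $T$ equals the $i$-th eigenvalue of $S$, then $T\cong S$ if and only if there are unimodular constants $c_1,\dots,c_n$ with $\inner{v_i,v_j}=c_i\overline{c_j}\inner{w_i,w_j}$ for all $i,j$; one implication is immediate since a unitary intertwiner must carry each one-dimensional eigenspace of $T$ onto that of $S$, and for the converse the bases $\{v_i\}$ and $\{c_iw_i\}$ have identical Gram matrices, so $v_i\mapsto c_iw_i$ extends to a unitary intertwining $T$ and $S$ (here the distinctness of the eigenvalues is exactly what makes these vectors bases). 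On the model-space side, if $\Theta$ is a finite Blaschke product with distinct zeros $z_1,\dots,z_n$ and $\phi\in H^\infty$, then $A_\phi^\Theta$ has the $\tilde{k}_{z_i}$ as unit eigenvectors and a one-line residue computation using $|\Theta|\equiv1$ on $\dD$ gives $\inner{\tilde{k}_{z_i},\tilde{k}_{z_j}}=\sqrt{(1-|z_i|^2)(1-|z_j|^2)}\,/\,(1-\overline{z_j}z_i)$, which collapses to $\inner{\tilde{k}_{z_i},\tilde{k}_{z_n}}=\sqrt{1-|z_i|^2}$ when $z_n=0$. Since an analytic truncated Toeplitz operator unitarily equivalent to a matrix with $n$ distinct eigenvalues must act on $\K_\Theta$ for a degree-$n$ finite Blaschke product with distinct zeros (a Jordan-form argument rules out repeated zeros, which would force a repeated eigenvalue), and since post-composing with a disk automorphism sending a chosen zero to $0$ yields a unitarily equivalent analytic truncated Toeplitz operator, I may assume in both directions that $z_n=0$.

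For the forward direction, suppose $M\cong A_\phi^\Theta$ with $z_n=0$ and relabel so that $\lambda_i=\phi(z_i)$. The invariant gives unimodular $c_i$ with $\inner{\vec{x}_i,\vec{x}_j}=c_i\overline{c_j}\inner{\tilde{k}_{z_i},\tilde{k}_{z_j}}$, so the triple product $\inner{\vec{x}_n,\vec{x}_i}\inner{\vec{x}_i,\vec{x}_j}\inner{\vec{x}_j,\vec{x}_n}$ picks up the factor $(c_n\overline{c_i})(c_i\overline{c_j})(c_j\overline{c_n})=1$ and therefore equals $\inner{\tilde{k}_{z_n},\tilde{k}_{z_i}}\inner{\tilde{k}_{z_i},\tilde{k}_{z_j}}\inner{\tilde{k}_{z_j},\tilde{k}_{z_n}}$; evaluating this with $z_n=0$ gives precisely \eqref{eq-TripleExplicit}. (Alternatively, keeping general zeros $w_i$ one gets the triple product $\frac{(1-|w_n|^2)(1-|w_i|^2)(1-|w_j|^2)}{(1-\overline{w_i}w_n)(1-\overline{w_j}w_i)(1-\overline{w_n}w_j)}$ and checks by a direct M\"obius computation that $z_i:=(w_i-w_n)/(1-\overline{w_n}w_i)$ rewrites it as in \eqref{eq-TripleExplicit}.)

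For the converse, assume \eqref{eq-TripleExplicit} holds for distinct $z_1,\dots,z_{n-1}\in\D$ and set $z_n:=0$. Taking $i=j$ gives $|\inner{\vec{x}_n,\vec{x}_i}|^2=1-|z_i|^2$, and since eigenvectors for the distinct eigenvalues $\lambda_i,\lambda_n$ cannot be parallel we get $|\inner{\vec{x}_n,\vec{x}_i}|<1$, so $z_i\neq0$ and $z_1,\dots,z_n$ are distinct. Let $\Theta$ be the Blaschke product with these zeros, let $\phi$ be a polynomial with $\phi(z_i)=\lambda_i$ (so $A_\phi^\Theta$ has eigenvalues $\lambda_i$ and eigenvectors $\tilde{k}_{z_i}$), and put $c_n:=1$ and $c_i:=\inner{\vec{x}_i,\vec{x}_n}/\sqrt{1-|z_i|^2}$ for $i<n$; these are unimodular by the preceding display. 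It remains to check $\inner{\vec{x}_i,\vec{x}_j}=c_i\overline{c_j}\inner{\tilde{k}_{z_i},\tilde{k}_{z_j}}$ for all $i,j$, after which the invariant delivers $M\cong A_\phi^\Theta$. This splits into: $i=j$ (trivial); one of $i,j$ equal to $n$ (immediate from the definition of $c_i$ and $\inner{\tilde{k}_{z_i},\tilde{k}_{z_n}}=\sqrt{1-|z_i|^2}$); and $i\neq j$ with $i,j<n$, where, using that \eqref{eq-TripleExplicit} holds for $i>j$ as well (by conjugation), one solves it for $\inner{\vec{x}_i,\vec{x}_j}$ and substitutes $1-|z_i|^2=|\inner{\vec{x}_i,\vec{x}_n}|^2$ to see that both sides equal $\inner{\vec{x}_i,\vec{x}_n}\,\overline{\inner{\vec{x}_j,\vec{x}_n}}\,/\,(1-\overline{z_j}z_i)$.

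I expect the real difficulty to be bookkeeping rather than ideas: once the Gram-matrix invariant is in hand, the observation that the triple products $\inner{\vec{x}_n,\vec{x}_i}\inner{\vec{x}_i,\vec{x}_j}\inner{\vec{x}_j,\vec{x}_n}$ are exactly the combinations insensitive to the unknown unimodular scalars is essentially the whole content. The genuine care is needed in the disk-automorphism normalization — both that one zero may be moved to $0$ and that \eqref{eq-TripleExplicit} is the automorphism-invariant shadow of the full conjugate-kernel Gram matrix — together with the short Jordan-form remark restricting to finite Blaschke products with distinct zeros; the case-by-case verification in the converse is routine but must be carried out with attention to the nonvanishing of the denominators.
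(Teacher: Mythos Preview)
Your proposal is correct and follows essentially the same route as the paper: the paper isolates the Gram-matrix/unitary-equivalence criterion and the triple-product cancellation as a separate lemma (its Lemma~\ref{LemmaMain}, conditions (ii)$\Leftrightarrow$(iii)), whereas you unpack that lemma inline by writing down the unimodular scalars $c_i=\inner{\vec{x}_i,\vec{x}_n}/\sqrt{1-|z_i|^2}$ explicitly, but the underlying argument---reduce via a disk automorphism to $z_n=0$, note that triple products kill the phase ambiguity, and reconstruct the unitary from the matching Gram data---is the same in both.
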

	
	The method of Theorem \ref{TheoremMain} is constructive, in the sense that if \eqref{eq-TripleExplicit}
	is satisfied, then we can construct an inner function $\Theta$ and a polynomial $\phi$ such that 
	$M$ is unitarily equivalent to $A_{\phi}^{\Theta}$ (denoted $M \cong A_{\phi}^{\Theta}$).
	In fact, $\Theta$ is the Blaschke product having zeros at $z_1,z_2,\ldots,z_{n-1}$ and $z_n = 0$.
	
	Although the condition \eqref{eq-TripleExplicit} appears
	somewhat complicated, it encodes a wealth of geometric data.  For instance,
	setting $j=i$ in \eqref{eq-TripleExplicit} yields $|\inner{ \vec{x}_i, \vec{x}_n}|^2 = 1 - |z_i|^2$.
	This in turn provides us with the formula
	\begin{equation}\label{eq-Moduli}
		|z_i| = \sqrt{1 - | \inner{ \vec{x}_i, \vec{x}_n}|^2}
	\end{equation}
	for the moduli of the unknown points $z_1,z_2,\ldots,z_{n-1}$.  Furthermore,
	the proof of Theorem \ref{TheoremMain} actually implies that
	\begin{equation*}
		| \inner{\vec{x}_i, \vec{x}_j}|^2 = |\inner{ k_{z_i}, k_{z_j} }|^2 = 1 - \rho^2(z_i,z_j)
	\end{equation*}
	where
	\begin{equation*}
		\rho(z,w):= \left| \frac{z-w}{ 1 - \overline{w}z} \right|
	\end{equation*}
	denotes the \emph{pseudohyperbolic metric} on $\D$.  In other words, we can
	obtain the pseudohyperbolic distances $\rho(z_i,z_j)$ \emph{directly} from the data
	$\vec{x}_1, \vec{x}_2, \ldots, \vec{x}_n$:
	\begin{equation}\label{eq-DistanceFormula}
		\rho(z_i,z_j) = \sqrt{1 - | \inner{\vec{x}_i,\vec{x}_j}|^2} .
	\end{equation}

	Let us briefly summarize the contents of this note.
	The proof of Theorem \ref{TheoremMain} is contained in Section \ref{SectionProof}.
	In Section \ref{SectionGeometry}, we use elementary hyperbolic geometry and some of the preceding geometric
	observations to construct an illuminating example.  Sections \ref{Section2x2} and \ref{Section3x3} 
	provide complete analyses of the $2\times 2$ and $3 \times 3$ cases, respectively.
	Among other things, we prove that every complex symmetric operator on a three-dimensional Hilbert space is unitarily equivalent
	to a direct sum of truncated Toeplitz operators (Theorem \ref{TheoremDS}).
	Section \ref{SectionNecessary} concerns a simple necessary condition for a matrix to be
	unitarily equivalent to an analytic truncated Toeplitz operator.
	We conclude this note in Section \ref{SectionOpen} with a number of open problems that we hope
	will spur further research into this topic.
	
	\medskip
	\noindent\textbf{Acknowledgments}:  We relied heavily upon numerical experiments
	to test several conjectures (which eventually led to proofs of Theorems \ref{Theorem3x3} and \ref{TheoremDS}).  
	We wish to thank J.~Tener for independently confirming several of our numerical observations.

\section{Proof of Theorem \ref{TheoremMain}}\label{SectionProof}	
	
	To prove Theorem \ref{TheoremMain}, we require a few preliminaries.
	We first note that for each disk automorphism $\psi$, the 
	weighted composition operator $U_{\psi}: \K_{\Theta} \to \K_{\Theta \circ \psi}$
	defined by $U_{\psi} f := \sqrt{\psi'} (f \circ \psi)$
	is unitary and furnishes a bijection between the set of analytic
	truncated Toeplitz operators on $\K_{\Theta}$ and those on $\K_{\Theta\circ\psi}$.
	In particular,
	\begin{equation}\label{eq-Spatial}
		A_{\phi}^{\Theta} \cong A_{\phi \circ \psi}^{\Theta\circ\psi}
	\end{equation}
	holds for all $\phi$ in $H^{\infty}$ \cite[Prop.~4.1]{TTOSIUES}.
	Our next ingredient is the following simple lemma, which is
	inspired by the proof of \cite[Thm.~2]{UECSMGC}.  

	\begin{Lemma}\label{LemmaMain}
		Let $\mathcal{X}$ and $\mathcal{Y}$ be $n$-dimensional Hilbert spaces.
		If $x_1,x_2,\ldots,x_n \in \mathcal{X}$ and $y_1,y_2,\ldots,y_n \in \mathcal{Y}$ are linearly independent sets of
		unit vectors such that $\inner{y_i,y_j} \neq 0$
		for $1 \leq i,j \leq n$, then the following are equivalent:
		\begin{enumerate}\addtolength{\itemsep}{0.5\baselineskip}
			\item There exist unimodular constants $\alpha_1, \alpha_2, \ldots, \alpha_n$
				and a unitary operator $U:\mathcal{X} \to \mathcal{Y}$ such that $Ux_i = \alpha_i y_i$ for $1 \leq i \leq n$.
	
			\item There exist unimodular constants $\alpha_1, \alpha_2, \ldots, \alpha_n$ such that
				\begin{equation*}
					\inner{x_i,x_j} = \alpha_i \overline{ \alpha_j }    \inner{ y_i,y_j}
				\end{equation*}
				for $1 \leq i \leq j \leq n$.
	
			\item For some fixed $k$,
				\begin{equation}\label{eq-Triple}
					\inner{x_k,x_i} \inner{x_i,x_j} \inner{x_j,x_k}     
					= \inner{y_k,y_i}\inner{y_i,y_j} \inner{ y_j,y_k} 
				\end{equation}
				holds for $1 \leq i \leq j \leq n$.
				
			\item The condition \eqref{eq-Triple} holds for $1 \leq i \leq j \leq k \leq n$.
		\end{enumerate}
	\end{Lemma}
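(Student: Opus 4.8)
The plan is to prove (i) $\Leftrightarrow$ (ii) directly and then close the loop (ii) $\Rightarrow$ (iv) $\Rightarrow$ (iii) $\Rightarrow$ (ii); together these yield all four equivalences. The two arrows between (i) and (ii) are just the standard fact that two spanning families in $n$-dimensional Hilbert spaces are intertwined by a unitary exactly when their Gram matrices coincide. For (i) $\Rightarrow$ (ii) I would compute $\inner{x_i,x_j} = \inner{Ux_i,Ux_j} = \inner{\alpha_i y_i,\alpha_j y_j} = \alpha_i\overline{\alpha_j}\inner{y_i,y_j}$. For (ii) $\Rightarrow$ (i), the hypothesis says the Gram matrix of $x_1,\dots,x_n$ agrees with that of the (still linearly independent) vectors $\alpha_1 y_1,\dots,\alpha_n y_n$ for $i\le j$, hence for all $i,j$ by conjugate symmetry, so the linear bijection determined by $x_i\mapsto\alpha_i y_i$ is isometric on a basis and therefore extends to the desired unitary $U$.

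For (ii) $\Rightarrow$ (iv), substitute the relation of (ii) into the left-hand side of \eqref{eq-Triple}; the scalar prefactor is $(\alpha_k\overline{\alpha_i})(\alpha_i\overline{\alpha_j})(\alpha_j\overline{\alpha_k}) = |\alpha_i|^2|\alpha_j|^2|\alpha_k|^2 = 1$, so \eqref{eq-Triple} holds for every triple of indices, in particular whenever $1\le i\le j\le k\le n$. The implication (iv) $\Rightarrow$ (iii) is then immediate by taking the fixed index to be $k:=n$: every pair $1\le i\le j\le n$ automatically satisfies $i\le j\le k$, so \eqref{eq-Triple} holds for all such $i,j$.

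The substantive step is (iii) $\Rightarrow$ (ii), where I would follow the strategy behind \cite[Thm.~2]{UECSMGC}. Setting $j=i$ in \eqref{eq-Triple} gives $|\inner{x_k,x_i}|^2 = |\inner{y_k,y_i}|^2$, and since $\inner{y_k,y_i}\neq 0$ by hypothesis we conclude $\inner{x_i,x_k}\neq 0$ for every $i$. This lets me define unimodular scalars
\[
	\alpha_i := \frac{\inner{x_i,x_k}}{\inner{y_i,y_k}}, \qquad 1\le i\le n,
\]
which satisfy $\alpha_k = 1$. It remains to check $\inner{x_i,x_j} = \alpha_i\overline{\alpha_j}\inner{y_i,y_j}$ for all $i,j$. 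For $i,j\neq k$ with $i\neq j$ one may solve \eqref{eq-Triple} for $\inner{x_i,x_j}$ (legitimate since $\inner{x_k,x_i}$ and $\inner{x_j,x_k}$ are nonzero), and using conjugate symmetry to rewrite $\inner{y_k,y_i}/\inner{x_k,x_i} = \alpha_i$ and $\inner{y_j,y_k}/\inner{x_j,x_k} = \overline{\alpha_j}$, the identity collapses to exactly $\inner{x_i,x_j} = \alpha_i\overline{\alpha_j}\inner{y_i,y_j}$; the cases $i=k$ or $j=k$ follow directly from the definition of the $\alpha$'s, and the range $i>j$ follows from $i<j$ by taking conjugates. (Here one uses that \eqref{eq-Triple}, although stated for $i\le j$, holds for all $i,j$, since interchanging $i$ and $j$ conjugates both sides.)

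I expect the main obstacle to be precisely this last step: realizing that everything should be anchored at the fixed index $k$ of (iii), defining the phases $\alpha_i$ as the displayed ratios, and then tracking the conjugations carefully so that the triple-product identity of (iii) degenerates to the bilinear relation of (ii). The small nonvanishing observation $\inner{x_i,x_k}\neq 0$ is essential both to make the $\alpha_i$ well defined and to justify dividing in \eqref{eq-Triple}; the rest is bookkeeping.
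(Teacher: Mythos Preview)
Your proof is correct and follows essentially the same route as the paper's. The paper declares (i)\,$\Leftrightarrow$\,(ii)\,$\Rightarrow$\,(iv)\,$\Rightarrow$\,(iii) to be obvious and proves only (iii)\,$\Rightarrow$\,(ii) by defining the unimodular ratios $\beta_{ij}=\inner{x_i,x_j}/\inner{y_i,y_j}$ and setting $\alpha_i=\beta_{ik}$, which is exactly your $\alpha_i=\inner{x_i,x_k}/\inner{y_i,y_k}$; your remaining verification is the same computation unwound.
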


	\begin{proof}
		The implications (i) $\Leftrightarrow$ (ii) $\Rightarrow$ (iv) $\Rightarrow$ (iii) are obvious.
		We therefore prove only (iii) $\Rightarrow$ (ii).  Setting $j=k$ in \eqref{eq-Triple} reveals that
		$|\inner{x_i,x_k}| = |\inner{y_i,y_k}| \not = 0$ for $1\leq i\leq n$.  In light of this, we conclude from
		\eqref{eq-Triple} that $|\inner{x_i,x_j}| = |\inner{y_i,y_j}| \not = 0$ for $1 \leq i,j \leq n$ whence the constants
		\begin{equation*}
			\beta_{ij} = \frac{ \inner{ x_i,x_j} }{ \inner{y_i,y_j} }
		\end{equation*}
		are unimodular and satisfy $\beta_{ij} = \beta_{ik} \beta_{kj}$
		for $1 \leq i,j\leq n$.  Now define $\alpha_i = \beta_{ik}$ for $1 \leq i \leq n$ 
		and observe that
		\begin{equation*}
			\inner{x_i,x_j} = \beta_{ij}\inner{y_i,y_j} = \beta_{ik} \beta_{kj} \inner{y_i,y_j}
			=  \alpha_i\overline{\alpha_j}  \inner{y_i,y_j}.\qedhere
		\end{equation*}
	\end{proof}
	
	With additional effort,
	one can remove the hypothesis that $\inner{y_i,y_j} \neq 0$ for $1 \leq i,j \leq n$.
	However, this is unnecessary in our case since eigenvectors of an analytic truncated
	Toeplitz operator cannot be orthogonal to each other.

\begin{proof}[Proof of Theorem \ref{TheoremMain}]
	$(\Rightarrow)$
	Suppose that $UM = A_{\phi}^{\Theta} U$ where $\phi \in H^{\infty}$
	and $U:\C^n \to \K_{\Theta}$ is unitary.
	Next observe that if $M\vec{x}_i = \lambda_i \vec{x}_i$, then
	$A_{\phi}^{\Theta} (U\vec{x}_i) = \lambda_i (U\vec{x}_i)$ for $1 \leq i \leq  n$.
	Thus there exists an enumeration
	$z_1,z_2,\ldots,z_n$ of the zeros of $\Theta$ and unimodular constants
	$\alpha_1,\alpha_2,\ldots,\alpha_n$ so that $U\vec{x}_i = \alpha_i \tilde{k}_{z_i}$ for $1 \leq i \leq n$.
	Without loss of generality we may assume that $z_n =0$.  Indeed, otherwise let
	$\psi$ be an automorphism of $\D$ satisfying $\psi(0) = z_n$ and note that
	$M \cong A_{\phi}^{\Theta} \cong A_{\phi \circ \psi}^{\Theta\circ\psi}$ by \eqref{eq-Spatial}
	(by precomposing with a rotation, we may also assume that $0 < z_1 < 1$ if we wish).
	By Lemma \ref{LemmaMain}, it follows that
	\begin{align*}
		\inner{\vec{x}_n,\vec{x}_i} \inner{\vec{x}_i,\vec{x}_j} \inner{\vec{x}_j,\vec{x}_n}
		&= \inner{ \tilde{k}_{0}, \tilde{k}_{z_i} } \inner{\tilde{k}_{z_i}, \tilde{k}_{z_j}} \inner{ \tilde{k}_{z_j}, \tilde{k}_0} \\
		&= \inner{ k_{z_i}, k_0 } \inner{ k_{z_j}, k_{z_i}} \inner{ k_0, k_{z_i}} \\
		&= \frac{ (1 - |z_i|^2)(1-|z_j|^2) }{ 1 - \overline{z_j}z_i}
	\end{align*}
	for $1 \leq i,j \leq n$, which is the desired condition \eqref{eq-TripleExplicit}.\medskip

	\noindent$(\Leftarrow)$
	Suppose that there exist distinct points
	$z_1,z_2,\ldots,z_{n-1}$ in $\D$ such that \eqref{eq-TripleExplicit} holds and let $\Theta$ be a Blasckhe
	product of order $n$ having its zeros at the points $z_1,z_2,\ldots,z_{n-1}$ and $z_n = 0$.
	It follows from \eqref{eq-TripleExplicit} that
	\begin{equation*}
		\inner{\vec{x}_n,\vec{x}_i} \inner{\vec{x}_i,\vec{x}_j} \inner{\vec{x}_j,\vec{x}_n} =
		\inner{ \tilde{k}_{0}, \tilde{k}_{z_i} } \inner{\tilde{k}_{z_i}, \tilde{k}_{z_j}} \inner{ \tilde{k}_{z_j}, \tilde{k}_0}
	\end{equation*}
	for $1 \leq i,j \leq n$
	whence by Lemma \ref{LemmaMain} there exists a unitary operator $U:\C^n \to \K_{\Theta}$ and unimodular
	constants $\alpha_1,\alpha_2,\ldots,\alpha_n$ such that
	$Ux_i = \alpha_i \tilde{k}_{z_i}$ for $1 \leq i \leq n$.
	If $\phi$ is a polynomial satisfying $\phi(z_i) = \lambda_i$ for $1 \leq i \leq n$, then
	$A_{\phi}^{\Theta} \tilde{k}_{z_i} =  \lambda_i \tilde{k}_{z_i}$ whence
	\begin{equation*}
		UM\vec{x}_i
		= \lambda_i U\vec{x}_i
		= \phi(z_i) \alpha_i \tilde{k}_{z_i}
		= \alpha_i A_{\phi}^{\Theta} \tilde{k}_{z_i}
		= A_{\phi}^{\Theta} U \vec{x}_i
	\end{equation*}
	for $1 \leq i \leq n$ so that  $M \cong A_{\phi}^{\Theta}$.
\end{proof}

\section{Hyperbolic geometry}\label{SectionGeometry}

	It is easy to construct matrices which are not unitarily equivalent
	to any analytic truncated Toeplitz operator.  For instance, any matrix having a pair of 
	orthogonal eigenvectors suffices.  On the other hand, what can be
	said about matrices having distinct eigenvalues and such that no pair of eigenvectors is orthogonal?  
	Using some basic hyperbolic geometry, we can construct a
	family of such matrices which are not unitarily equivalent to an analytic truncated Toeplitz operator.

	\begin{Example}\label{Example4x4} 
		Let us begin by searching for a matrix $M \in {\bf M}_n(\C)$ having distinct eigenvalues
		and whose corresponding normalized eigenvectors $\vec{x}_1, \vec{x}_2, \ldots, \vec{x}_n$
		are such that the condition \eqref{eq-TripleExplicit} does not hold for any points
		$z_1,z_2,\ldots,z_n$ in $\D$.
		Fix $0 < g < 1$ and let $G$ be the $n \times n$ matrix with entries
		\begin{equation*}
			G_{ij} = 
			\begin{cases}
				1 & \text{if $i = j$}, \\
				g & \text{if $i \neq j$}.
			\end{cases}
		\end{equation*}
		Since $G = g \vec{u} \vec{u}^* + (1-g)I$ where $\vec{u} = (1,1,\ldots,1)$ and
		$I$ denotes the $n \times n$ identity matrix, it follows that the eigenvalues of $G$ are precisely
		\begin{equation*}
			\underbrace{ 1-g,\quad 1-g,\ldots,\quad 1-g}_{\text{$n-1$ times}},\quad 1+ (n-1)g,
		\end{equation*}
		whence $G$ is positive definite.  A routine computation confirms that the entries of the 
		positive square root $X$ of $G$ are given by
		\begin{equation*}
			X_{ij} = 
			\begin{cases}
				\dfrac{(n-1)\sqrt{1-g} + \sqrt{1+(n-1)g}}{n} & \text{if $i = j$}, \\[8pt]
				\dfrac{ - \sqrt{1-g} + \sqrt{1+ (n-1)g} }{n} & \text{if $i \neq j$},
			\end{cases}
		\end{equation*}
		and hence each column $\vec{x}_i$ of $X = (\vec{x}_1 | \vec{x}_2 | \cdots | \vec{x}_n)$ is a unit vector.
		If $\lambda_1,\lambda_2,\ldots,\lambda_n$ are distinct complex numbers and 
		$D = \diag(\lambda_1,\lambda_2,\ldots,\lambda_n)$, then the matrix
		$M = XDX^{-1}$ satisfies $X^*X = G$ and $M \vec{x}_i = \lambda_i \vec{x}_i$ for $1 \leq i \leq n$.
		
		Suppose toward a contradiction that $M$ is unitarily equivalent to
		$A_{\phi}^{\Theta}$ for some $\phi \in H^{\infty}$ and some Blaschke
		product $\Theta$ having distinct zeros $z_1,z_2,\ldots,z_n$ (the hypothesis that $D$ has distinct
		eigenvalues ensures that the $z_i$ are distinct).  By \eqref{eq-DistanceFormula}
		it follows that $\rho(z_i,z_j) = \sqrt{1-g^2}$ for $i \neq j$.  
		Since the \emph{hyperbolic metric} (also called the \emph{Poincar\'e metric}) on $\D$ satisfies
		\begin{equation*}
			\psi(z,w) := \log \frac{1 + \rho(z,w)}{1 - \rho(z,w)},
		\end{equation*}
		it follows that
		\begin{equation}\label{eq-Distances}
			\psi(z_i,z_j) = 
			\begin{cases}
				r & \text{if $i \neq j$}, \\
				0 & \text{if $i = j$},
			\end{cases}
		\end{equation}
		where $r =2\tanh^{-1} \sqrt{1-g^2}$ \cite[p.~4-5]{MR2261424}.  This is impossible if $n \geq 5$.
		Indeed, suppose that $z_1,z_2,z_3,z_4,z_5$ satisfy \eqref{eq-Distances}.
		Recalling that a pseudohyperbolic circle is also a Euclidean circle \cite[p.~3]{MR2261424}, we obtain circles
		$\Gamma_1,\Gamma_2$, both of the same pseudohyperbolic radius, 
		such that $z_1,z_3,z_4,z_5 \in \Gamma_2$ and $z_2,z_3,z_4,z_5 \in \Gamma_1$.  
		This implies that $\{z_3,z_4,z_5\} \subseteq \Gamma_1 \cap \Gamma_2$, which contradicts the fact
		that two distinct Euclidean circles can meet in at most two points.		
		
		A slightly more complicated proof shows that \eqref{eq-Distances} is also impossible if $n =4$.
		Although the corresponding result is obvious in the Euclidean plane, to prove it in the hyperbolic plane one 
		first recalls that the \emph{Poincar\'e model} of the hyperbolic plane satisfies
		Hilbert's axioms \cite[Sect.~39]{Hartshorne}.
		One can then prove that Propositions I.2-I.22 and I.24-I.28 of Euclid's \emph{Elements} \cite{Euclid} 
		can be obtained in the Poincar\'e model \cite[Thm.~10.4]{Hartshorne} and then proceed as in the Euclidean case.
	\end{Example}

	It actually turns out that any matrix $3 \times 3$ or larger produced using the method of Example \ref{Example4x4} 
	cannot be a complex symmetric operator.
	In particular, no such matrix can be unitarily equivalent to a truncated Toeplitz operator, analytic or otherwise.
	First note that the entries of $W = X^{-1}$ are given by
	\begin{equation*}
		W_{ij}=
		\begin{cases}
			\frac{1}{n}\left( \frac{n-1}{ \sqrt{1-g} } + \frac{1}{\sqrt{1 + (n-1)g}} \right) & \text{if $i=j$},\\
			\frac{1}{n}\left( -\frac{1}{ \sqrt{1-g} } + \frac{1}{\sqrt{1 + (n-1)g}} \right) & \text{if $i\neq j$}.
		\end{cases}
	\end{equation*}
	Writing $W = ( \vec{w}_1 | \vec{w}_2| \cdots | \vec{w}_n)$ in column-by-column format, we find that
	\begin{equation*}
		\norm{\vec{w}_i} = \sqrt{ \frac{1 + (n-2)g}{1 + [ (n-2) - (n-1)g]g} }
	\end{equation*}
	for $i = 1,2,\ldots,n$.  Upon dividing $W$ by the preceding quantity we obtain the matrix 
	$Y = (\vec{y}_1 | \vec{y}_2 | \cdots | \vec{y}_n)$ whose entries are given by
	\begin{equation*}
		Y_{ij}=
		\begin{cases}
			\frac{ \sqrt{1-g} + (n-1) \sqrt{1 + (n-1)g} }{ n \sqrt{1 + (n-2)g} } & \text{if $i=j$},\\[9pt]
			\frac{ \sqrt{1-g} - \sqrt{1 + (n-1)g} }{ n \sqrt{1 + (n-2)g} } & \text{if $i\neq j$},
		\end{cases}
	\end{equation*}
	and whose columns $\vec{y}_1, \vec{y}_2 , \ldots, \vec{y}_n$ are unit vectors.
	Since $M = XDX^{-1}$ and $X$ is self-adjoint, 
	it follows that $M^* = Y \overline{D} Y^{-1}$
	Therefore $\vec{y}_1, \vec{y}_2, \ldots, \vec{y}_n$ are unit eigenvectors of $M^*$
	corresponding to the eigenvalues $\overline{\lambda_1}, \overline{\lambda_2}, \ldots, \overline{\lambda_n}$.
	If $n \geq 3$, then 
	\begin{equation*}
		|\inner{ \vec{x}_i, \vec{x}_j} | = g \neq \frac{g}{1 + (n-2)g} = | \inner{ \vec{y}_i, \vec{y}_j }|,
	\end{equation*}
	whence $M$ is not a complex symmetric operator by \cite[Thm.~1]{UECSMGC}.

\section{The $2 \times 2$ case}\label{Section2x2}

	If $M \in {\bf M}_2(\C)$ and $\Theta$ is a Blaschke product of order two, then 
	there exists a truncated Toeplitz operator on $\K_{\Theta}$ which is unitarily equivalent to $M$ \cite[Thm.~5.2]{TTOSIUES}.
	However, if one insists upon using \emph{analytic} symbols, then things are quite different.

	\begin{Corollary}\label{Corollary2x2}
		If $M \in {\bf M}_2(\C)$, then $M$ is unitarily equivalent to an analytic truncated Toeplitz
		operator if and only if either 
		\begin{enumerate}\addtolength{\itemsep}{0.25\baselineskip}
			\item $M$ is a multiple of the identity,
			\item $M$ is not normal.
		\end{enumerate}
	\end{Corollary}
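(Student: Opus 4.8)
The plan is to prove the two implications separately, after splitting $M \in {\bf M}_2(\C)$ into three exhaustive and mutually exclusive cases: (a) $M$ is a scalar multiple of the identity; (b) $M$ has two distinct eigenvalues; (c) $M$ has a repeated eigenvalue but is not scalar, so that $M$ is not diagonalizable and in particular not normal. I will use repeatedly the elementary fact that a $2\times 2$ matrix with distinct eigenvalues is normal if and only if its two unit eigenvectors $\vec{x}_1,\vec{x}_2$ are orthogonal: one direction follows because orthonormal eigenvectors yield a unitary diagonalization, and the other because a normal matrix has an orthonormal eigenbasis whose eigenlines are uniquely determined once the eigenvalues are distinct. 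Consequently, in case (b) the matrix $M$ is non-normal exactly when $\inner{\vec{x}_1,\vec{x}_2}\neq 0$, and since $\vec{x}_1,\vec{x}_2$ are linearly independent we always have $|\inner{\vec{x}_1,\vec{x}_2}| < 1$.

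For the sufficiency direction $(\Leftarrow)$, case (a) is immediate: for any order-two Blaschke product $\Theta$ and the constant symbol $\phi\equiv c$ we have $A_\phi^\Theta = cI$ on the two-dimensional space $\K_\Theta$. In case (b) with $M$ non-normal, I would appeal to Theorem \ref{TheoremMain} with $n=2$: the only instance of \eqref{eq-TripleExplicit} is $i=j=1$, and it collapses to the single scalar equation $|\inner{\vec{x}_1,\vec{x}_2}|^2 = 1-|z_1|^2$. Because $0 < |\inner{\vec{x}_1,\vec{x}_2}| < 1$, the point $z_1 := \sqrt{1-|\inner{\vec{x}_1,\vec{x}_2}|^2}$ lies in $(0,1)\subset\D$ and differs from $z_2 = 0$, so Theorem \ref{TheoremMain} produces a polynomial $\phi$ and an order-two Blaschke product $\Theta$ with $M\cong A_\phi^\Theta$.

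Case (c) of $(\Leftarrow)$ lies outside the scope of Theorem \ref{TheoremMain}, so I would handle it directly. By Schur's theorem $M$ is unitarily equivalent to $\minimatrix{\lambda}{c}{0}{\lambda}$ for some $c\neq 0$. Taking $\Theta(z)=z^2$, so that $\{1,z\}$ is an orthonormal basis of $\K_\Theta$, and $\phi(z)=\lambda+cz$, a one-line computation gives $A_\phi^{z^2}(1) = \lambda + cz$ and $A_\phi^{z^2}(z) = \lambda z$, i.e.\ the matrix of $A_\phi^{z^2}$ in this basis is $\minimatrix{\lambda}{0}{c}{\lambda}$; conjugating by the unitary that interchanges $1$ and $z$ turns it into $\minimatrix{\lambda}{c}{0}{\lambda}$, so $M\cong A_\phi^{z^2}$.

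For the necessity direction $(\Rightarrow)$, suppose $M\cong A_\phi^\Theta$ with $\phi\in H^\infty$, $\Theta$ an order-two Blaschke product, and $M$ normal; I claim $M$ is scalar. If not, then by the fact above $M$ has distinct eigenvalues, hence so does $A_\phi^\Theta$. This already forces $\Theta$ to have two distinct zeros: if instead $\Theta$ had a double zero, then \eqref{eq-Spatial} would give $A_\phi^\Theta\cong A_{\phi\circ\psi}^{z^2}$ for a suitable disk automorphism $\psi$, and the computation of the previous paragraph shows every such operator has a repeated eigenvalue. But when $\Theta$ has distinct zeros the eigenvectors of $A_\phi^\Theta$ are never orthogonal, so neither are those of $M$, contradicting normality of the non-scalar matrix $M$. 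I expect the only real work to lie exactly where Theorem \ref{TheoremMain} is silent, namely the non-diagonalizable matrices in $(\Leftarrow)$ and the double-zeroed $\Theta$ in $(\Rightarrow)$; both are dispatched by the same short model-space computation together with the spatial equivalence \eqref{eq-Spatial}, and everything else reduces to Theorem \ref{TheoremMain} and routine linear algebra.
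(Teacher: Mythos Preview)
Your proof is correct and follows essentially the same approach as the paper: both directions hinge on Theorem~\ref{TheoremMain} for the distinct-eigenvalue case, Schur triangularization together with the model space $\K_{z^2}$ for the repeated-eigenvalue case, and the non-orthogonality of the conjugate kernels $\tilde{k}_{z_i}$ for ruling out non-scalar normal matrices. The only cosmetic differences are that you organize $(\Rightarrow)$ as a contrapositive and carry out the $\K_{z^2}$ computation slightly more explicitly, whereas the paper simply cites that a normal triangular matrix is diagonal.
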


	\begin{proof}
		$(\Rightarrow)$ Suppose that $M \cong A_{\phi}^{\Theta}$ for some Blaschke product
		$\Theta$ of order two and some $\phi$ in $H^{\infty}$. \medskip
		
		\noindent\textbf{Case 1}:  If $\Theta$ has distinct zeros $z_1,z_2$, then
		$\tilde{k}_{z_1}$ and $\tilde{k}_{z_2}$ are linearly independent eigenvectors of $A_{\phi}$ corresponding to the eigenvalues
		$\phi(z_1)$ and $\phi(z_2)$, respectively.  In particular, $M$ is diagonalizable.
		If $\phi(z_1) = \phi(z_2)$, then $M$ is a multiple of the identity.
		If $\phi(z_1) \neq \phi(z_2)$, then $A_{\phi}$ is not normal since
		$\inner{\tilde{k}_{z_1}, \tilde{k}_{z_2}}  \neq 0$.
		\medskip
		
		\noindent\textbf{Case 2}:
		If $\Theta$ has a repeated root, then by \eqref{eq-Spatial} we may assume that $\Theta(z) = z^2$.
		Thus $M$ is unitarily equivalent to a lower-triangular Toeplitz matrix.    Since a normal triangular matrix is
		diagonal \cite[p.~133]{Axler}, it follows that $M$ is either non-normal or a 
		multiple of the identity matrix.
	
		\medskip
		\noindent$(\Leftarrow)$
		If $M$ is a  multiple of the identity matrix, then there is nothing to prove.  Suppose now that
		$M$ is not normal.  If $M$ has repeated eigenvalues, then Schur's theorem on unitary triangularization \cite[Thm.~2.3.1]{HJ}
		asserts that $M$ is unitarily equivalent to a lower triangular Toeplitz matrix and hence an analytic
		truncated Toeplitz operator.  Suppose that $M$ has distinct eigenvalues with corresponding
		unit eigenvectors $\vec{x}_1$ and $\vec{x}_2$.  The only nontrivial condition in \eqref{eq-TripleExplicit}
		that needs to be satisfied is $1 - |z_1|^2 = |\inner{\vec{x}_2,\vec{x}_1}|^2 \neq 0$, which has many solutions.
	\end{proof}
	
	\begin{Example}
		Consider the non-normal matrix
		\begin{equation*}
			M = \minimatrix{1}{2}{0}{3},
		\end{equation*}
		whose eigenvalues are $\lambda_1 = 1$ and $\lambda_2 = 3$.  Corresponding
		unit eigenvectors of $M$ corresponding to these eigenvalues are 
		\begin{equation*}
			\vec{x}_1 = (1,0), \qquad \vec{x}_2 = (\tfrac{1}{\sqrt{2}}, \tfrac{1}{\sqrt{2}}).
		\end{equation*}
		Guided by the proof of Theorem \ref{TheoremMain}, we set 
		\begin{equation*}
			z_1 = \sqrt{1-|\inner{\vec{x}_2,\vec{x}_1}|^2} = \tfrac{1}{\sqrt{2}}
		\end{equation*}
		and $z_2 = 0$.  Next, we search for a polynomial $\phi(z)$ such that 
		\begin{equation*}
			\phi(\tfrac{1}{\sqrt{2}}) = 1, \qquad \phi(0) = 3.
		\end{equation*}
		One such polynomial is $\phi(z) = 3 - 2\sqrt{2}z$, from which we conclude that $M$ is unitarily equivalent to
		$A_{\phi}^{\Theta}$ where 
		\begin{equation*}
			\Theta(z) = z\left( \frac{z - \tfrac{1}{\sqrt{2}}}{1 - \tfrac{1}{\sqrt{2}} z} \right).
		\end{equation*}
	\end{Example}	

	Following J.~Tener \cite{Tener}, we say that a matrix is UECSM if it is unitarily equivalent
	to a complex symmetric matrix (i.e., it represents a complex symmetric operator with 
	respect to some orthonormal basis).
	Although there are many proofs of the following result
	(see \cite[Cor.~3]{UECSMGC},  \cite[Cor.~3.3]{CFT}, \cite[Ex.~6]{CSOA}, \cite{UECSMMC}, \cite{UET},
	\cite[Cor.~1]{SNCSO}, \cite[p.~477]{McIntosh}, or \cite[Cor.~3]{Tener}),
	we feel compelled to provide yet another.

	\begin{Corollary}\label{Corollary2x2UECSM}
		If $M \in {\bf M}_2(\C)$, then $M$ is UECSM.
	\end{Corollary}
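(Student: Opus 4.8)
The plan is to deduce Corollary \ref{Corollary2x2UECSM} from Corollary \ref{Corollary2x2} by a short case analysis on the normality of $M$. If $M$ is normal, then the spectral theorem gives an orthonormal basis of eigenvectors, so $M$ is unitarily equivalent to a diagonal matrix; a diagonal matrix is trivially complex symmetric (it equals its own transpose), so $M$ is UECSM. The remaining case is when $M$ is not normal, and here I would invoke Corollary \ref{Corollary2x2}: a non-normal $2 \times 2$ matrix is unitarily equivalent to an analytic truncated Toeplitz operator $A_\phi^\Theta$. Since every truncated Toeplitz operator is a complex symmetric operator (it satisfies $A_\phi = C A_\phi^* C$ for the conjugation $C$ on $\K_\Theta$, as recalled in the introduction), it is representable by a complex symmetric matrix with respect to some orthonormal basis of the two-dimensional space $\K_\Theta$. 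Hence $M$ is UECSM in this case as well.

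Putting the two cases together, every $M \in {\bf M}_2(\C)$ is UECSM. The only place where one must be slightly careful is the normal case: a multiple of the identity falls under alternative (i) of Corollary \ref{Corollary2x2} and is certainly complex symmetric, while a normal matrix that is not a multiple of the identity need \emph{not} be unitarily equivalent to an analytic truncated Toeplitz operator (by Corollary \ref{Corollary2x2} again), so one genuinely has to handle it directly via diagonalization rather than routing everything through Corollary \ref{Corollary2x2}. There is no real obstacle here: the argument is a two-line dichotomy (normal versus non-normal), with the non-normal half supplied by the preceding corollary together with the standing fact that truncated Toeplitz operators are complex symmetric. The main "work," such as it is, was already done in proving Corollary \ref{Corollary2x2}.
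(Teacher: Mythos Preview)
Your proof is correct and follows essentially the same route as the paper: split into the normal case (handled directly by the Spectral Theorem, since diagonal matrices are complex symmetric) and the non-normal case (handled via Corollary~\ref{Corollary2x2} together with the fact that truncated Toeplitz operators are complex symmetric). The paper's version is slightly terser, lumping ``multiple of the identity'' in with the normal case, but the argument is the same.
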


	\begin{proof}
		Let $M$ be a $2 \times 2$ matrix.  If $M$ is either a multiple of the identity
		or normal, then $M$ is trivially UECSM (by the Spectral Theorem).
		Otherwise, $M$ is unitarily equivalent to an analytic truncated Toeplitz operator 
		by \eqref{Corollary2x2} and hence UECSM \cite{CCO,CSOA}.
	\end{proof}

\section{The $3 \times 3$ case}\label{Section3x3}

	Although the $3 \times 3$ case is significantly more complicated than the $2 \times 2$ case,
	we are still able to arrive at a complete solution, including a
	simple computational criterion \eqref{eq-Determinant}.
	Moreover, we also show that every $3 \times 3$ complex symmetric matrix is unitarily equivalent to a direct sum of
	truncated Toeplitz operators (Theorem \ref{TheoremDS}).

	\begin{Theorem}\label{Theorem3x3}
		If $M \in {\bf M}_3(\C)$ has distinct eigenvalues $\lambda_1,\lambda_2,\lambda_3$ with corresponding normalized eigenvectors 
		$\vec{x}_1, \vec{x}_2, \vec{x}_3$ satisfying $\inner{\vec{x}_i,\vec{x}_j} \neq 0$
		for $1 \leq i,j \leq 3$, then the following are equivalent:
		\begin{enumerate}\addtolength{\itemsep}{0.5\baselineskip}
			\item $M$ is unitarily equivalent to an analytic truncated Toeplitz operator,
			\item $M$ is unitarily equivalent to a complex symmetric matrix,
			\item The condition
				\begin{equation}\label{eq-Determinant}
					\det X^*X = (1 - | \inner{\vec{x}_1 , \vec{x}_2 }|^2)
					(1 - | \inner{\vec{x}_2 , \vec{x}_3 }|^2)(1 - | \inner{\vec{x}_3 , \vec{x}_1 }|^2)
				\end{equation}
				holds, where $X = ( \vec{x}_1 | \vec{x}_2 | \vec{x}_3)$ is the matrix having
				$\vec{x}_1, \vec{x}_2, \vec{x}_3$ as its columns.
		\end{enumerate}
	\end{Theorem}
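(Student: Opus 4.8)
The plan is to prove the cycle of implications $\mathrm{(i)} \Rightarrow \mathrm{(ii)} \Rightarrow \mathrm{(iii)} \Rightarrow \mathrm{(i)}$. The first implication is immediate: every truncated Toeplitz operator is a complex symmetric operator \cite{CCO,CSOA}, and unitary equivalence to a complex symmetric matrix is clearly invariant under unitary equivalence. For the remaining implications it is convenient to abbreviate $a = |\inner{\vec{x}_1,\vec{x}_2}|$, $b = |\inner{\vec{x}_2,\vec{x}_3}|$, $c = |\inner{\vec{x}_3,\vec{x}_1}|$, and
\[
	T = \inner{\vec{x}_1,\vec{x}_2}\inner{\vec{x}_2,\vec{x}_3}\inner{\vec{x}_3,\vec{x}_1},
\]
and to record two elementary observations. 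First, since the $\vec{x}_i$ are linearly independent (the eigenvalues being distinct) and pairwise non-orthogonal, we have $0 < a, b, c < 1$. Second, expanding the $3 \times 3$ Hermitian determinant directly gives $\det X^*X = 1 - a^2 - b^2 - c^2 + 2\Re T$, so that \eqref{eq-Determinant} is equivalent to the single real identity $2\Re T = a^2 b^2 + b^2 c^2 + c^2 a^2 - a^2 b^2 c^2$.

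For $\mathrm{(ii)} \Rightarrow \mathrm{(iii)}$, suppose $M$ is unitarily equivalent to a complex symmetric matrix. Then $M^*$ has distinct eigenvalues $\overline{\lambda_1}, \overline{\lambda_2}, \overline{\lambda_3}$ with corresponding unit eigenvectors $\vec{y}_1, \vec{y}_2, \vec{y}_3$, and the necessary modulus condition for unitary equivalence to a complex symmetric matrix \cite[Thm.~1]{UECSMGC} yields $|\inner{\vec{y}_i,\vec{y}_j}| = |\inner{\vec{x}_i,\vec{x}_j}|$ for $1 \le i, j \le 3$. Writing $M = X D X^{-1}$ with $D = \diag(\lambda_1,\lambda_2,\lambda_3)$, one has $M^* = (X^*)^{-1}\overline{D}X^*$, so the columns of $(X^*)^{-1} = X(X^*X)^{-1}$ are eigenvectors of $M^*$; normalizing the $i$-th such column gives $\vec{y}_i$, and a short computation with $G := X^*X$ produces
\[
	|\inner{\vec{y}_i,\vec{y}_j}|^2 = \frac{ |(G^{-1})_{ij}|^2 }{ (G^{-1})_{ii}\,(G^{-1})_{jj} } .
\]
Expressing the entries of $G^{-1}$ through cofactors of $G$ (Cramer's rule), the powers of $\det G$ cancel, and the case $i = 1$, $j = 2$ of the modulus condition reduces to
\[
	\bigl| \inner{\vec{x}_3,\vec{x}_1}\inner{\vec{x}_2,\vec{x}_3} - \inner{\vec{x}_2,\vec{x}_1} \bigr|^2 = a^2(1-b^2)(1-c^2);
\]
expanding the left-hand side and recognizing $\Re T$ turns this into exactly the identity equivalent to \eqref{eq-Determinant}. (The index pairs $(2,3)$ and $(3,1)$ give the same identity, so one of them already suffices.) Hence \eqref{eq-Determinant} holds.

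For $\mathrm{(iii)} \Rightarrow \mathrm{(i)}$, we apply Theorem \ref{TheoremMain}. By \eqref{eq-Spatial} it suffices to produce distinct points $z_1, z_2 \in \D$ for which \eqref{eq-TripleExplicit} holds with $z_3 = 0$ (for the pairs $(1,1)$, $(2,2)$, $(1,2)$), and we may look for $z_1$ in $(0,1)$. Guided by \eqref{eq-Moduli}, set
\[
	z_1 = \sqrt{1-c^2}, \qquad z_2 = \frac{1 - T/a^2}{\sqrt{1-c^2}} .
\]
The $(1,1)$ instance of \eqref{eq-TripleExplicit} holds by the choice of $z_1$; since $|T|^2 = a^2 b^2 c^2$ one computes $1 - \overline{z_2}z_1 = \overline{T}/a^2$, whence the $(1,2)$ instance becomes equivalent to $|z_2|^2 = 1 - b^2$, which in turn — after clearing denominators — is precisely the identity equivalent to \eqref{eq-Determinant}; the $(2,2)$ instance then holds as well. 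Finally, $0 < b, c < 1$ gives $z_1, z_2 \in \D \setminus \{0\}$, and $z_1 \neq z_2$: if $z_1 = z_2$, then $z_2$ is real and positive, so $|z_2|^2 = 1 - c^2$, and comparing with $|z_2|^2 = 1 - b^2$ forces $b = c$, while $1 - \overline{z_2}z_1 = 1 - z_1^2 = c^2$ forces $\overline{T} = a^2 c^2$, hence $|T| = a^2 c^2$; but $|T| = abc = ac^2$, so $a = 1$, contradicting $a < 1$. Thus Theorem \ref{TheoremMain} applies and $M$ is unitarily equivalent to $A_{\phi}^{\Theta}$, where $\Theta$ is the Blaschke product with zeros $z_1, z_2, 0$ and $\phi$ is any polynomial with $\phi(z_1) = \lambda_1$, $\phi(z_2) = \lambda_2$, $\phi(0) = \lambda_3$.

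The main obstacle is the pair of bookkeeping computations — matching the modulus condition from \cite[Thm.~1]{UECSMGC} against the Gram-determinant identity, and solving the system \eqref{eq-TripleExplicit} for $z_2$ and reducing the leftover constraint to \eqref{eq-Determinant} — together with the delicate check that $0$, $z_1$, $z_2$ are genuinely distinct points of $\D$, which is exactly where the hypotheses $\inner{\vec{x}_i,\vec{x}_j} \neq 0$ and the distinctness of the eigenvalues are used.
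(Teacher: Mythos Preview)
Your proof is correct. The implication $\mathrm{(i)}\Rightarrow\mathrm{(ii)}$ and the direction $\mathrm{(iii)}\Rightarrow\mathrm{(i)}$ are handled essentially as in the paper: the paper parameterizes $z_2=\sqrt{1-b^2}\,e^{it}$ and reduces to a modulus equation for $e^{it}$, whereas you write down $z_2=(1-T/a^2)/\sqrt{1-c^2}$ explicitly and then verify $|z_2|^2=1-b^2$; algebraically these are the same computation, and you additionally check that $0,z_1,z_2$ are genuinely distinct, a point the paper passes over silently.

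The real difference is in $\mathrm{(ii)}\Rightarrow\mathrm{(iii)}$. The paper first replaces $M$ by a unitarily equivalent matrix with $M=M^{t}$, so that the eigenvectors of $M^*$ are the complex conjugates $\overline{\vec{x}_i}$; it then proves the three identities $|\inner{\vec{x}_i,\overline{\vec{x}_i}}|^2=\prod_{j\neq i}(1-|\inner{\vec{x}_i,\vec{x}_j}|^2)$ by a Gram--Schmidt argument and reads off \eqref{eq-Determinant} from $|\det X|^2=|\det X^{t}X|$, exploiting that $X^{t}X$ is diagonal. Your route instead stays with the original $M$, invokes the modulus criterion of \cite[Thm.~1]{UECSMGC} to get $|\inner{\vec{y}_i,\vec{y}_j}|=|\inner{\vec{x}_i,\vec{x}_j}|$ for the normalized eigenvectors $\vec{y}_i$ of $M^*$, and then extracts the identity by expressing $\inner{\vec{y}_i,\vec{y}_j}$ via cofactors of the Gram matrix $G=X^*X$. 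The paper's approach is self-contained and yields the auxiliary identities \eqref{eq-Choice1}--\eqref{eq-Choice3} (later reused in Corollary~\ref{CorollaryCSM}); your approach is slightly shorter, makes the connection to \cite{UECSMGC} more transparent, and shows directly that a \emph{single} instance ($i=1$, $j=2$) of that criterion already encodes \eqref{eq-Determinant}.
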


	\begin{proof}
		(i) $\Rightarrow$ (ii) This implication is well-known \cite{CCO, CSOA, Sarason} (i.e., every
		truncated Toeplitz operator is a complex symmetric operator).
		\medskip
		
		\noindent (ii) $\Rightarrow$ (iii).  Without loss of generality, we may assume that $M = M^t$
		is a complex symmetric matrix.  In this case
		$\overline{ \vec{x}_1}, \overline{ \vec{x}_2}, \overline{ \vec{x}_3}$ are unit eigenvectors of $M^*$ corresponding
		to the eigenvalues $\overline{\lambda_1}, \overline{\lambda_2}, \overline{\lambda_3}$.  In light of the fact that 
		\begin{equation*}
			\lambda_i \inner{ \vec{x}_i, \overline{ \vec{x}_j} } = \inner{ M\vec{x}_i, \overline{\vec{x}_j} } 
			= \inner{ \vec{x}_i, M^*\overline{\vec{x}_j} }
			= \inner{ \vec{x}_i , \overline{ M \vec{x}_j} } = \inner{ \vec{x}_i, \overline{ \lambda_j \vec{x}_j} } 
			=\lambda_j \inner{ \vec{x}_i, \overline{ \vec{x}_j} },
		\end{equation*}
		we see that $\inner{ \vec{x}_i , \overline{ \vec{x}_j} } = 0$ for $i \neq j$.
		
		We claim that
		\begin{align}
			| \inner{ \vec{x}_1, \overline{ \vec{x}_1} } |^2 &= (1 - |\inner{\vec{x}_1,\vec{x}_2}|^2)(1 - |\inner{\vec{x}_3,\vec{x}_1}|^2), 
				\label{eq-Choice1}\\
			| \inner{ \vec{x}_2, \overline{ \vec{x}_2} } |^2 &= (1 - |\inner{\vec{x}_1,\vec{x}_2}|^2)
				(1 - |\inner{\vec{x}_2,\vec{x}_3}|^2), \label{eq-Choice2}\\
			| \inner{ \vec{x}_3, \overline{ \vec{x}_3} } |^2 &= (1 - |\inner{\vec{x}_2,\vec{x}_3}|^2)(1 - |\inner{\vec{x}_3,\vec{x}_1}|^2).
				\label{eq-Choice3}
		\end{align}
		We prove only \eqref{eq-Choice2}, since \eqref{eq-Choice1} and \eqref{eq-Choice3} can be proven using a similar method.
		First note that $\overline{\vec{x}_1}$ and $\overline{\vec{x}_2}$ are linearly independent since $\lambda_1 \neq \lambda_2$.
		Since $\inner{ \vec{x}_3, \overline{ \vec{x}_1} } = \inner{ \vec{x}_3, \overline{ \vec{x}_2} } = 0$, it follows that 
		$\{ \overline{\vec{x}}_1, \overline{\vec{x}}_2, \vec{x}_3\}$ is a basis for $\C^3$.  Thus the basis
		$\{ \vec{e}_1, \vec{e}_2, \vec{e}_3\}$ defined by setting $\vec{e}_1 = \overline{ \vec{x}_1}$,
		$\vec{e}_3 = \vec{x}_3$, and
		\begin{equation*}
			\vec{e}_2 
			= \frac{ \overline{\vec{x}_2} - \inner{ \overline{ \vec{x}_2}, \vec{e}_1 } \vec{e}_1}
				{\norm{ \overline{\vec{x}_2} - \inner{ \overline{ \vec{x}_2}, \vec{e}_1 } \vec{e}_1}} 
			= \frac{ \overline{ \vec{x}_2} - \inner{ \vec{x}_1 , \vec{x}_2 } \overline{ \vec{x}_1} }{ \sqrt{1 - | \inner{ \vec{x}_1, \vec{x}_2}|^2}},
		\end{equation*}
		is orthonormal.  Since $\vec{x}_2$ is a unit vector it follows that
		\begin{align*}
			1
			&= | \inner{ \vec{x}_2, \vec{e}_1} |^2 + | \inner{ \vec{x}_2, \vec{e}_2} |^2 + | \inner{ \vec{x}_2, \vec{e}_3} |^2 \\
			&= | \inner{ \vec{x}_2, \overline{\vec{x}_1}} |^2 + \left| \left< \vec{x}_2, 
				\frac{ \overline{ \vec{x}_2} - \inner{ \vec{x}_1 , \vec{x}_2 } \overline{ \vec{x}_1} }
				{ \sqrt{1 - | \inner{ \vec{x}_1, \vec{x}_2}|^2}} \right> \right|^2 + | \inner{ \vec{x}_2, \vec{x}_3} |^2 \\
			&= \frac{| \inner{ \vec{x}_2, \overline{ \vec{x}_2}}|^2 }{ 1 - | \inner{ \vec{x}_1, \vec{x}_2} |^2}+ | \inner{ \vec{x}_2, \vec{x}_3} |^2,
		\end{align*}
		which is equivalent to \eqref{eq-Choice2}.  
		
		Since $X^t X = \diag(\inner{ \vec{x}_1, \overline{\vec{x}_1} },\inner{ \vec{x}_2, \overline{\vec{x}_2} },
		\inner{ \vec{x}_3, \overline{\vec{x}_3} })$, it follows from \eqref{eq-Choice1}, \eqref{eq-Choice2},
		and \eqref{eq-Choice3} that
		\begin{align*}
			\det X^*X
			&= |\det X|^2 \\
			&= |\det X^t X| \\
			&= |\inner{ \vec{x}_1, \overline{\vec{x}_1} }\inner{ \vec{x}_2, \overline{\vec{x}_2} } \inner{ \vec{x}_3, \overline{\vec{x}_3} }| \\
			&= (1 - | \inner{\vec{x}_1 , \vec{x}_2 }|^2)(1 - | \inner{\vec{x}_2 , \vec{x}_3 }|^2)(1 - | \inner{\vec{x}_3 , \vec{x}_1 }|^2).
		\end{align*}
		This yields the desired condition \eqref{eq-Determinant}.
		\medskip

		\noindent (iii) $\Leftrightarrow$ (i). 
		In light of Theorem \ref{TheoremMain} and its proof, it follows that $M$ is 
		unitarily equivalent to an analytic truncated Toeplitz operator if and only if there exist
		$z_1 \in (0,1)$ and $z_2 \in \D$ such that \eqref{eq-TripleExplicit} holds for 
		$1 \leq i,j,k \leq 3$.  Moreover, by \eqref{eq-Moduli} we know that $z_1$ and $z_2$
		must be given by
		\begin{equation}\label{eq-Zvalues}
			z_1 = \sqrt{ 1 - | \inner{ \vec{x}_1, \vec{x}_3} |^2},\qquad
			z_2 = \sqrt{1 - |\inner{ \vec{x}_2, \vec{x}_3}|^2} e^{it}
		\end{equation}
		for some $t \in \R$.  In other words, $M$ is
		unitarily equivalent to an analytic truncated Toeplitz operator
		if and only if there exists a real number $t$ such that the numbers $z_1,z_2$,
		as defined by \eqref{eq-Zvalues}, satisfy \eqref{eq-TripleExplicit}:
		\begin{equation*}
			\inner{ \vec{x}_3,\vec{x}_1} \inner{ \vec{x}_1, \vec{x}_2} \inner{ \vec{x}_2, \vec{x}_3}
			= \frac{ | \inner{ \vec{x}_1, \vec{x}_3}|^2 | \inner{ \vec{x}_2 , \vec{x}_3} | ^2}{1 - \overline{z_2}z_1},
		\end{equation*}
		which is equivalent to
		\begin{equation*}
			\overline{z_2} z_1
			= 1- \frac{  \inner{ \vec{x}_1, \vec{x}_3}  \inner{ \vec{x}_3 , \vec{x}_2}}{\inner{ \vec{x}_1,\vec{x}_2} }.
		\end{equation*}
		Substituting \eqref{eq-Zvalues} into the preceding we obtain
		\begin{equation*}
			e^{-it} \sqrt{ 1 - | \inner{ \vec{x}_1, \vec{x}_3} |^2}\sqrt{ 1 - | \inner{ \vec{x}_2, \vec{x}_3} |^2}
			= 1- \frac{  \inner{ \vec{x}_1, \vec{x}_3}  \inner{ \vec{x}_3 , \vec{x}_2}}{\inner{ \vec{x}_1,\vec{x}_2} },
		\end{equation*}
		which has a solution $t \in \R$ if and only if
		\begin{equation*}
			(1 - | \inner{ \vec{x}_1, \vec{x}_3} |^2 )( 1 - | \inner{ \vec{x}_2, \vec{x}_3} |^2)
			= \left|1- \frac{  \inner{ \vec{x}_1, \vec{x}_3}  \inner{ \vec{x}_3 , \vec{x}_2}}{\inner{ \vec{x}_1,\vec{x}_2} } \right|^2.
		\end{equation*}
		Expanding the preceding, we obtain
		\begin{align}
			&| \inner{ \vec{x}_1, \vec{x}_3} \inner{ \vec{x}_3, \vec{x}_2} |^2 
			+| \inner{ \vec{x}_2, \vec{x}_1} \inner{ \vec{x}_1, \vec{x}_3} |^2
			+| \inner{ \vec{x}_3, \vec{x}_2} \inner{ \vec{x}_2, \vec{x}_1} |^2 \nonumber \\
			&\qquad =
			|\inner{\vec{x}_1,\vec{x}_2}\inner{\vec{x}_2,\vec{x}_3}\inner{\vec{x}_3,\vec{x}_1}|^2 
			+ 2 \Re \inner{\vec{x}_1,\vec{x}_2}\inner{\vec{x}_2,\vec{x}_3}\inner{\vec{x}_3,\vec{x}_1}.\label{eq-Crazy}
		\end{align}
		Adding the quantity
		\begin{equation*}
			1 - |\inner{\vec{x}_1,\vec{x}_2}|^2 -  |\inner{\vec{x}_2,\vec{x}_3}|^2 -  |\inner{\vec{x}_3,\vec{x}_1}|^2
		\end{equation*}
		to both sides of \eqref{eq-Crazy} yields
		\begin{align*}
			1 - |\inner{\vec{x}_1,\vec{x}_2}|^2 -  |\inner{\vec{x}_2,\vec{x}_3}|^2 -  |\inner{\vec{x}_3,\vec{x}_1}|^2
			+ 2 \Re\inner{\vec{x}_1,\vec{x}_2}\inner{\vec{x}_2,\vec{x}_3}\inner{\vec{x}_3,\vec{x}_1}\\
			= (1 - |\inner{\vec{x}_1,\vec{x}_2}|^2)(1 - |\inner{\vec{x}_2,\vec{x}_3}|^2)(1 - |\inner{\vec{x}_3,\vec{x}_1}|^2),
		\end{align*}
		which is equivalent to \eqref{eq-Determinant}. 
	\end{proof}

	\begin{Theorem}\label{TheoremDS}
		If $M \in {\bf M}_3(\C)$ is complex symmetric, then $M$ is unitarily equivalent to a direct sum of truncated Toeplitz operators.
	\end{Theorem}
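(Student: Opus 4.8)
The plan is to run through the possible Jordan structures of the complex symmetric matrix $M \in {\bf M}_3(\C)$, splitting off an orthogonal reducing subspace whenever one is available and realizing $M$ directly as a truncated Toeplitz operator otherwise; when $M$ decomposes orthogonally, each summand has dimension $\le 2$ and is therefore automatically unitarily equivalent to a truncated Toeplitz operator (trivially for a $1\times1$ block, and by \cite[Thm.~5.2]{TTOSIUES} for a $2\times2$ block), so the direct sum is too. I normalize $M = M^t$, so the relevant conjugation is entrywise conjugation and complex symmetry says $\inner{\vec x_i,\overline{\vec x_j}} = \vec x_i^{t}\vec x_j = 0$ for eigenvectors at distinct eigenvalues. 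First suppose $M$ has three distinct eigenvalues. If no two of its unit eigenvectors $\vec x_1,\vec x_2,\vec x_3$ are orthogonal, then, being complex symmetric, $M$ satisfies condition (ii) of Theorem \ref{Theorem3x3}, hence condition (i): $M$ is unitarily equivalent to an analytic truncated Toeplitz operator. If some pair is orthogonal, say $\inner{\vec x_1,\vec x_2}=0$, combine this with $\vec x_1^{t}\vec x_2 = \vec x_1^{t}\vec x_3 = 0$ to see that $\vec x_1^{t}$ annihilates $\operatorname{span}(\vec x_2,\overline{\vec x_2},\vec x_3)$; since that bilinear form is nondegenerate and $\vec x_1\ne 0$, this span is at most two-dimensional, forcing $\overline{\vec x_2}\in\operatorname{span}(\vec x_2,\vec x_3)$. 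Taking the inner product of this inclusion with $\vec x_1$ and using $\inner{\vec x_1,\overline{\vec x_2}}=0$ yields either $\overline{\vec x_2}\parallel \vec x_2$ (whence $\vec x_2$ is an eigenvector of $M^*$ as well, so $\operatorname{span}(\vec x_2)$ reduces $M$) or $\inner{\vec x_1,\vec x_3}=0$ (whence $\operatorname{span}(\vec x_1)$ reduces $M$); either way $M$ splits orthogonally as $(1\times1)\oplus(2\times2)$.

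Now suppose $M$ has a repeated eigenvalue. If $M$ is normal it is diagonal, a direct sum of $1\times1$ truncated Toeplitz operators. If $M-\lambda I$ is a nonzero nilpotent with $(M-\lambda I)^2=0$ it has rank one; writing $M-\lambda I=\vec u\vec v^*$ with $\vec u\perp\vec v$ one checks directly that $\operatorname{span}(\vec u,\vec v)$ is a reducing subspace, so $M\cong(2\times2)\oplus(1\times1)$. If the spectrum is $\{\lambda,\lambda,\mu\}$ with $\lambda\ne\mu$ and $M$ is not normal — either diagonalizable, or carrying a $2\times2$ Jordan block at $\lambda$ — then I realize $M$ as an analytic truncated Toeplitz operator $A^{\Theta}_{\phi}$: take $\Theta$ a Blaschke product of order $3$ (three simple zeros in the diagonalizable case; a double zero and a simple zero, with $\phi$ having nonzero derivative at the double zero, in the Jordan case) and $\phi$ a polynomial interpolating the eigenvalues. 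Since $M$ is determined up to unitary equivalence by $\lambda,\mu$, the relative position of its eigenspaces, and (in the Jordan case) the norm of its nilpotent part, it suffices to verify that the corresponding configuration for $A^{\Theta}_{\phi}$ ranges over all admissible values as the zeros of $\Theta$ vary — an intermediate value argument.

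The one remaining, irreducible, case is $M = \lambda I + N$ with $N = N^t$ nilpotent of order $3$; here I show $N$ is unitarily equivalent to an analytic truncated Toeplitz operator on $\K_{z^3}$. Choose the orthonormal basis $f_3 \in \ker N$, $f_2 \in \ker N^2 \ominus \ker N$, $f_1 \in (\ker N^2)^{\perp}$; then automatically $Nf_2 = \alpha f_3$ with $\alpha\ne0$, and since $\tr N = 0$ one gets $Nf_1 = \beta f_2 + \gamma f_3$ with $\beta\ne0$. Now $N^2 = \alpha\beta\, f_3 f_1^*$ is a rank-one \emph{symmetric} matrix, which forces $f_3 = \kappa\overline{f_1}$ for some unimodular $\kappa$; since the conjugation is entrywise, $\overline{f_3}=\bar\kappa f_1$, and because $f_2\perp f_1,f_3$ we get $\overline{f_2}\perp f_1,f_3$, so $\overline{f_2}\in\operatorname{span}(f_2)$ and $f_2$ is $C$-real. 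A short computation with $N = N^t$ then gives $|\alpha| = |\beta|$, so after rescaling $f_1,f_2,f_3$ by unimodular constants the matrix of $N$ becomes $\begin{pmatrix}0&0&0\\a&0&0\\b&a&0\end{pmatrix} = A^{z^3}_{az+bz^2}$, completing the proof.

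I expect the main obstacle to be the non-normal subcase of the $\{\lambda,\lambda,\mu\}$ case — above all its $J_2(\lambda)$-block version — where Theorem \ref{Theorem3x3} does not apply, there need be no reducing subspace, and $M$ need not be unitarily equivalent to its Jordan form, so the truncated Toeplitz representative must be produced by the interpolation-plus-continuity argument rather than by a clean normal form. The index-$3$ nilpotent case is the other delicate point, but there complex symmetry, via the symmetry of $N^2$, supplies exactly the equality $|\alpha|=|\beta|$ that the Toeplitz (constant-diagonal) shape requires.
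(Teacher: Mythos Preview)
Your overall strategy differs from the paper's: where the paper relies on the trace criterion of Lemma~\ref{LemmaTraceTest} (together with \texttt{Mathematica}) to determine, in each Schur normal form, exactly which parameter values are UECSM and then treats each resulting subcase, you instead argue structurally from $M=M^t$, splitting off reducing subspaces whenever possible. Your treatment of the three-distinct-eigenvalues case with an orthogonal pair, of the index-$2$ nilpotent case, and of the index-$3$ nilpotent case is correct and in fact cleaner than the paper's---in particular your bilinear-form argument in the first of these, and your use of the symmetry of $N^2$ in the last, avoid any computation.

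The genuine gap is precisely where you flag it: the spectrum $\{\lambda,\lambda,\mu\}$ with $M$ non-normal. Two points here. First, the diagonalizable subcase needs no continuity argument at all: $M-\lambda I$ then has rank one, so by the very argument you give for the index-$2$ nilpotent case (write $M-\lambda I=\vec u\vec v^*$; then $\operatorname{span}(\vec u,\vec v)$ is reducing), $M$ splits as $(2\times2)\oplus(1\times1)$. Second, in the Jordan-block subcase your proposal is not a proof. The assertion that $M$ is determined up to unitary equivalence by ``$\lambda,\mu$, the relative position of the eigenspaces, and the norm of the nilpotent part'' is not justified and is in fact too few invariants (after Schur triangularization and normalization there are still several independent real parameters subject to the UECSM constraint $|b+ac|^2=|c|^2(1+|c|^2)$). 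Without a correct list of invariants, an intermediate-value argument cannot even be formulated. The paper closes this case not by continuity but by an explicit construction: it writes down the matrix of $A^{\Theta}_{\alpha z+\beta z^2}$ on $\K_{\Theta}$ for $\Theta(z)=z^2\frac{z-r}{1-rz}$ in the basis $1,\,z,\,z^2\sqrt{1-r^2}/(1-rz)$, and then solves algebraically for $\alpha,\beta,r$ in terms of the (phase-normalized) entries $a,b,c$ of $M$. You should either carry out this explicit matching or, if you insist on a parameter-counting argument, first establish the correct complete set of unitary invariants for this Jordan type.
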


	In order to proceed with the proof of Theorem \ref{TheoremDS}, we require the 
	following lemma from \cite{UET}.  The proof in the preprint \cite{UET} is long and involved, since it requires
	invoking the fact that a matrix $7 \times 7$ or smaller which is unitarily equivalent to its transpose is UECSM
	(this can fail for matrices $8 \times 8$ and larger).  We provide a much simpler proof below.

	\begin{Lemma}\label{LemmaTraceTest}
		If $M \in{\bf M}_3(\C)$, then $M$ is UECSM if and only if
		\begin{equation}\label{eq-TraceTest}
			\tr M^*M^2{M^*}^2 M = \tr M {M^*}^2 M^2 M^*.
		\end{equation}
	\end{Lemma}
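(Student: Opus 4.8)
The plan is to prove the two implications of the equivalence separately, the common engine being the elementary identity $\tr w(M^{t},(M^{t})^{*})=\tr\widetilde{w}(M,M^{*})$, valid for every word $w$ in the noncommuting variables $M,M^{*}$, where $\widetilde{w}$ denotes $w$ written backwards. Indeed $(M^{t})^{*}=\overline{M}=(M^{*})^{t}$, so substituting $M\mapsto M^{t}$ and $M^{*}\mapsto(M^{*})^{t}$ transposes each letter of $w$; since $(A_{1}\cdots A_{k})^{t}=A_{k}^{t}\cdots A_{1}^{t}$ and the trace is transpose-invariant, the identity follows. For the forward implication, suppose $M$ is UECSM, say $UMU^{*}=S$ with $U$ unitary and $S=S^{t}$. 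A one-line computation shows $V^{*}MV=M^{t}$ with $V=U^{*}\overline{U}$ unitary, so $M$ is unitarily equivalent to $M^{t}$; hence for every word $w$ one has $\tr w(M,M^{*})=\tr w(M^{t},(M^{t})^{*})=\tr\widetilde{w}(M,M^{*})$. Taking $w=M^{*}M^{2}{M^{*}}^{2}M$, whose reversal is $M{M^{*}}^{2}M^{2}M^{*}$, produces exactly \eqref{eq-TraceTest}.

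For the converse I would invoke Pearcy's classical theorem that two $3\times 3$ matrices are unitarily equivalent precisely when they have the same trace on the seven words $w$, $w^{2}$, $w^{3}$, $ww^{*}$, $w^{2}w^{*}$, $w^{2}{w^{*}}^{2}$, and $w^{2}{w^{*}}^{2}ww^{*}$. For the first six of these, the reversed word $\widetilde{w}$ is a cyclic rotation of $w$, so $\tr w(M,M^{*})=\tr\widetilde{w}(M,M^{*})$ holds automatically and these six invariants already agree for $M$ and $M^{t}$. The seventh word is the genuine exception: up to a cyclic rotation, $w^{2}{w^{*}}^{2}ww^{*}$ equals $M^{*}M^{2}{M^{*}}^{2}M$ and its reversal equals $M{M^{*}}^{2}M^{2}M^{*}$, so the equality of its trace for $M$ and for $M^{t}$ is precisely \eqref{eq-TraceTest}. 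Thus \eqref{eq-TraceTest} forces $M\cong M^{t}$, and it remains only to deduce that $M$ is UECSM.

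This last step — "$M\cong M^{t}$ implies $M$ is UECSM in dimension three" — is the part I expect to be hard, since the analogous statement fails for $8\times 8$ matrices and its proof in dimensions $\le 7$ is exactly the machinery we wish to bypass; in dimension three I would dispatch it by cases on the spectral picture of $M$. If $M$ is normal it is UECSM by the Spectral Theorem. If $M$ has three distinct eigenvalues with unit eigenvectors $\vec{x}_{1},\vec{x}_{2},\vec{x}_{3}$ no two of which are orthogonal, then running the Gram-matrix identities in the proof of Theorem \ref{Theorem3x3} backwards shows that $M\cong M^{t}$ is equivalent to the determinant criterion \eqref{eq-Determinant}, so $M$ is UECSM by that theorem. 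The remaining situations — a repeated eigenvalue, or distinct eigenvalues with some orthogonal pair of eigenvectors — call for a short but careful direct analysis of the relevant Schur normal forms: here one uses Schur's unitary triangularization, notes that $M\cong M^{t}$ already matches the eigenvector Gram data of $M$ with that of $M^{*}$, and checks that this either forces $M$ to split off a reducing subspace (so $M$ becomes a direct sum of blocks of size at most two, which are UECSM by Corollary \ref{Corollary2x2UECSM}) or leaves $M$ unitarily equivalent to a balanced, hence complex symmetric, triangular form. Assembling the cases completes the proof. The delicate points are exactly this degenerate casework: verifying that $M\cong M^{t}$ genuinely supplies enough orthogonality (respectively enough rigidity in the Jordan structure) to trigger the reduction, and organizing the finitely many Schur normal forms efficiently.
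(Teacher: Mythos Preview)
Your overall architecture matches the paper's: both reduce the lemma to the equivalence ``$M$ is UECSM $\Leftrightarrow M\cong M^t$'' and then use the Pearcy--Sibirski\u{\i} trace list to show that $M\cong M^t$ is equivalent to \eqref{eq-TraceTest}. Your treatment of the forward implication and of the six ``automatic'' trace words is fine and essentially identical to the paper's.

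The difference is in the step ``$M\cong M^t\Rightarrow M$ is UECSM.'' You propose a spectral case analysis, appealing to Theorem~\ref{Theorem3x3} in the generic case and to ad hoc Schur-form arguments in the degenerate ones; you yourself flag this as the delicate part, and indeed the sketch is incomplete. In particular, invoking Theorem~\ref{Theorem3x3} here is awkward: condition (ii) there \emph{is} UECSM, so you cannot simply quote (ii)$\Leftrightarrow$(iii); you would need to show directly that $M\cong M^t$ forces \eqref{eq-Determinant}, which is not quite ``running the Gram identities backwards'' but a separate computation with the biorthogonal eigenvector system of $M$ and $M^*$. The remaining degenerate cases are only gestured at.

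The paper bypasses all of this with a short, uniform argument you should note. First reduce to $M$ irreducible (if $M$ is reducible, Corollary~\ref{Corollary2x2UECSM} already gives UECSM). If $M=UM^tU^*$ with $U$ unitary, then $M(U\overline{U})=(U\overline{U})M$, so by irreducibility $U\overline{U}=\alpha I$ with $|\alpha|=1$; this forces $U=\alpha U^t$, hence $\alpha^2=1$, and $\alpha=-1$ is impossible since an odd-dimensional skew-symmetric matrix is singular. Thus $U=U^t$, and Takagi's factorization $U=VV^t$ with $V$ unitary yields $V^*MV=(V^*MV)^t$. This five-line argument replaces your entire case analysis and is worth internalizing; it is exactly where the hypothesis $n=3$ (odd) is used.
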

	
	\begin{proof}
		By Corollary \ref{Corollary2x2UECSM} we may assume that $M$ is irreducible since otherwise
		$M$ is obviously UECSM.  
		We use the term \emph{irreducible} in the operator-theoretic sense.
		Namely, a matrix $T \in M_n(\C)$ is called irreducible if $T$ is not unitarily
		equivalent to a direct sum $A \oplus B$ where $A \in M_d(\C)$ and $B \in M_{n-d}(\C)$
		for some $1 < d < n$.  Equivalently, $T$ is \emph{irreducible} if and only if the only normal matrices
		commuting with $T$ are multiples of the identity. 	

		We claim that $M$ is UECSM if and only if $M \cong M^t$.  One direction is simple, 
		for if $M$ is UECSM, then there exist a unitary matrix 
		$U$ such that $U^*MU = (U^*MU)^t$.  In other words,
		$M(UU^t) = (UU^t)M^t$ whence $M \cong M^t$.  
		
		On the other hand if $M \cong M^t$, then we may write $M = UM^tU^*$ where $U$ is unitary.
		It follows that $M^t = \overline{U} M U^t$ whence $M U\overline{U} = U \overline{U} M$.
		Since $M$ is irreducible, $U\overline{U} = \alpha I$ for some unimodular constant $\alpha$.
		The preceding implies that $U = \alpha U^t$ whence $\alpha^2 = 1$.  However, $\alpha = -1$
		is impossible since every skew-symmetric matrix of odd dimension is singular. 
		Therefore $U = U^t$ is symmetric and unitary.  By Takagi's Factorization Theorem \cite{HJ}, we may write
		$U = V V^t$ where $V$ is unitary whence $V^*MV = (V^*MV)^t$.
		In other words, $M$ is UECSM.
	
		Having shown that $M$ is UECSM if and only if $M \cong M^t$, we need only show that \eqref{eq-TraceTest} holds
		if and only if $M \cong M^t$.  To this end, we recall that a 
		refinement by Sibirski{\u\i} \cite{Sibirskii} of a well-known result of Pearcy \cite{Pearcy}
		asserts that $A,B \in {\bf M}_3(\C)$ are unitarily equivalent
		if and only if $\Phi(A) = \Phi(B)$ 
		where $\Phi:M_3(\C)\to \C^7$ is the function defined by
		\begin{equation}\label{eq-Words}
			\Phi(X) = (\tr X, \, \tr  X^2,\, \tr X^3,\, \tr X^* X,\, \tr X^*X^2, \, \tr {X^*}^2 X^2, \, \tr X^* X^2 {X^*}^2X).
		\end{equation}
		It is easy to see that the first six traces in \eqref{eq-Words} are automatically
		equal for $X = M$ and $X = M^t$ \cite{UET}.  In other words, $M \cong M^t$ if and only if
		$\tr X^* X^2 {X^*}^2X$ yields the same value for $X = M$ and $X = M^t$.
		Using standard properties of the trace and transpose one sees that this condition is equivalent to \eqref{eq-TraceTest}.
	\end{proof}
	
	With the preceding lemma in hand, we are now ready to prove Theorem \ref{TheoremDS}.

	\begin{proof}[Pf.~of Theorem \ref{TheoremDS}]
		Suppose that $M$ is a $3 \times 3$ matrix.  After possibly scaling and adding a multiple of the identity, up to unitary equivalence
		$M$ falls into precisely one of the following classes:
		\begin{equation}\label{eq-Listed}
			\underbrace{\megamatrix{0}{0}{0}{a}{0}{0}{b}{c}{0},}_{\text{one distinct eigenvalue}} \qquad
			\underbrace{\megamatrix{0}{0}{0}{a}{0}{0}{b}{c}{1},}_{\text{two distinct eigenvalues}}\qquad
			\underbrace{\megamatrix{0}{0}{0}{a}{1}{0}{b}{c}{\lambda},}_{\text{three distinct eigenvalues}}
		\end{equation}
		where $a,b,c \in \C$ and $\lambda \neq 0,1$.
		\medskip
	
		\noindent\textbf{Case 1}:  If $M$ has one distinct eigenvalue, then without loss of generality we may assume that
		$M$ is of the form of the first matrix listed in \eqref{eq-Listed}.  Using Lemma \ref{LemmaTraceTest}
		and \texttt{Mathematica} it follows that $M$ is UECSM if and only if
		\begin{equation*}
			|a|^2|c|^2(|a|^2-|c|^2) = 0
		\end{equation*}
		(see \cite[Ex.~1]{SNCSO} and \cite[Ex.~1]{Tener} for other approaches).
		In other words, $M$ is UECSM if and only if either (i) $a=0$, (ii) $c=0$, or (iii) $|a| = |c|$.
		If either $a = 0$ or $c= 0$, then $M$ has rank one whence $M$ is unitarily equivalent to a truncated Toeplitz operator
		by \cite[Thm.~5.1]{TTOSIUES}.  On the other hand, if $|a| = |c|$, then conjugating $M$ by a diagonal unitary matrix shows that $M$ is
		unitarily equivalent to a Toeplitz matrix and hence $M$ represents a truncated Toeplitz operator on $\K_{z^3}$ with 
		respect to the orthonormal basis $\{1,z,z^2\}$.
		\medskip
		
		\noindent\textbf{Case 2}:  If $M$ has exactly two distinct eigenvalues, then we may assume that
		$M$ is of the form of the second matrix listed in \eqref{eq-Listed}.  Using Lemma \ref{LemmaTraceTest} and \texttt{Mathematica}, 
		it follows that $M$ is UECSM if and only if either (i) $a=0$, (ii) $b=c=0$, or (iii) $a,c\neq 0$ and
		\begin{equation}\label{eq-Magic}
			|b+ac|^2 = |c|^2(1+|c|^2).
		\end{equation}
		If $a =0$, then $M$ has rank one whence $M$ is unitarily equivalent to a truncated Toeplitz operator by \cite[Thm.~5.1]{TTOSIUES}.
		If $b=c=0$, then $M$ is the direct sum of a $2 \times 2$ and a $1 \times 1$ matrix.  By \cite[Thm.~5.2]{TTOSIUES},
		it follows that $M$ is unitarily equivalent to the direct sum of truncated Toeplitz operators.
		
		The third case is more difficult to handle.  Suppose that $a,c\neq 0$ and that \eqref{eq-Magic} holds.
		Upon conjugating $M$ by a suitable diagonal unitary matrix, we may further assume that $c > 0$ and
		$b+ac \geq 0$.  Let $\Theta$ denote the Blaschke product
		\begin{equation*}
			\Theta(z) = z^2\left( \frac{z-r}{1 - rz}\right),
		\end{equation*}
		which has a double root at $0$ and a simple zero at $r \in (0,1)$ which is to be determined.
		An orthonormal basis $\{e_1,e_2,e_3\}$ for $\K_{\Theta}$ is given by 
		\begin{equation}\label{eq-TMW}
			e_1(z) = 1, \qquad e_2(z) = z, \qquad
			e_3(z) = z^2 \frac{ \sqrt{1-r^2} }{1 - rz}.
		\end{equation}
		If $\phi(z) = \alpha z + \beta z^2$, then the matrix for $A_{\phi}^{\Theta}$ with respect to the 
		basis \eqref{eq-TMW} is given by
		\begin{equation}\label{eq-TTOM}
			\megamatrix{0}{0}{0}{\alpha}{0}{0}{\beta\sqrt{1-r^2}}{(\alpha+\beta r)\sqrt{1-r^2}}{ r(\alpha + \beta r)}.
		\end{equation}
		At this point it is easily verified that for
		\begin{equation*}
			\alpha = a, \qquad \beta = \frac{b}{c} \sqrt{1+c^2}, \qquad
			r = \frac{1}{\sqrt{1+c^2}},
		\end{equation*}
		the matrix \eqref{eq-TTOM} is precisely $M$.  In other words, $M$ is unitarily equivalent to an analytic
		truncated Toeplitz operator on $\K_{\Theta}$.
		\medskip
	
		\noindent\textbf{Case III}:  Suppose that $M$ has three distinct eigenvalues.
		If $M$ has a pair of eigenvectors which are orthogonal, then we may assume that
		$M$ is of the form of the third matrix listed in \eqref{eq-Listed} where $c = 0$.
		In this case, Lemma \ref{LemmaTraceTest} and \texttt{Mathematica} tell us that $M$ is UECSM if and only if
		$|ab|^2|\lambda - 1|^2 = 0$.  Since $\lambda \neq 1$ it follows that either $a=0$ or $b=0$.
		Both cases lead to the conclusion that $M$ is unitarily equivalent to the direct sum of a 
		$2 \times 2$ and a $1 \times 1$ matrix.  In particular, $M$ is unitarily equivalent to a direct
		sum of truncated Toeplitz operators.
		
		If $M$ has no pair of nonzero eigenvectors which are orthogonal to each other, then we
		may appeal to Theorem \ref{Theorem3x3} to conclude that $M$ is unitarily equivalent
		to an analytic truncated Toeplitz operator.
	\end{proof}

\section{A necessary condition}\label{SectionNecessary}

	Recall that every truncated Toeplitz operator is a complex symmetric operator and hence UECSM \cite{CCO,CSOA,Sarason}.
	Thus we might as well start with a complex symmetric matrix $M$ in the first place.
	Unlike Theorem \ref{TheoremMain}, the following proposition is phrased completely in terms
	of the initial data $\vec{x}_1,\vec{x}_2,\ldots,\vec{x}_n$.  In particular, there is no mention
	whatsoever of the unknowns $z_1,z_2,\ldots,z_{n-1}$.

	\begin{Corollary}\label{CorollaryCSM}
		Suppose that $M \in {\bf M}_n(\C)$ is complex symmetric and has distinct eigenvalues
		with corresponding unit eigenvectors $\vec{x}_1,\vec{x}_2,\ldots,\vec{x}_n$ satisfying $\inner{ \vec{x}_i,\vec{x}_j} \neq 0$
		for $i \neq j$.  The condition 
		\begin{equation}\label{eq-CSM}
			| \inner{ \vec{x}_i, \overline{\vec{x}_i} } |^2 = \prod_{\substack{j=1\\j\neq i}}^n (1 - | \inner{\vec{x}_j , \vec{x}_i }|^2)
		\end{equation}
		for $i=1,2,\ldots,n$ is necessary for $M$ to be unitarily equivalent to an analytic truncated
		Toeplitz operator.  If $n \leq 3$, then the preceding condition is also sufficient.
	\end{Corollary}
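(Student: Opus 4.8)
The plan is to treat the two directions separately, necessity being the substantive one; sufficiency for $n\le 3$ will be a bookkeeping consequence of results already in the paper.

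\emph{Necessity.} Assume $M$ (which we take to satisfy $M=M^{t}$) is unitarily equivalent to an analytic truncated Toeplitz operator. Theorem~\ref{TheoremMain} supplies a finite Blaschke product $\Theta$ with distinct zeros $z_1,\dots,z_{n-1}$ and $z_n=0$, a polynomial $\phi$ with $\phi(z_i)=\lambda_i$, and a unitary $U\colon\C^n\to\K_\Theta$ with $UM=A_\phi^\Theta U$. Since $U$ carries eigenvectors of $M$ to eigenvectors of $A_\phi^\Theta$, we have $U\vec{x}_i=\alpha_i\tilde k_{z_i}$ for some unimodular $\alpha_i$. The key observation is that $U$ \emph{also} intertwines $M^{*}$ and $(A_\phi^\Theta)^{*}=CA_\phi^\Theta C$, and that — precisely because $M=M^{t}$ — the conjugate vector $\overline{\vec{x}_i}$ is a unit eigenvector of $M^{*}=\overline{M}$ for $\overline{\lambda_i}$, while $k_{z_i}=C\tilde k_{z_i}$ is a unit eigenvector of $(A_\phi^\Theta)^{*}$ for the same eigenvalue. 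As these adjoints have distinct eigenvalues, it follows that $U\overline{\vec{x}_i}=\beta_i k_{z_i}$ for some unimodular $\beta_i$, and hence
\[
	|\inner{\vec{x}_i,\overline{\vec{x}_i}}| = |\inner{U\vec{x}_i,\,U\overline{\vec{x}_i}}| = |\inner{\tilde k_{z_i},k_{z_i}}| .
\]

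It remains to evaluate the right-hand side. Using $\tilde k_{z_i}(z)=\sqrt{1-|z_i|^2}\,\Theta(z)/(z-z_i)$ and $k_{z_i}(z)=\sqrt{1-|z_i|^2}/(1-\overline{z_i}z)$ together with the reproducing property of $H^2$, one computes $\inner{\tilde k_{z_i},k_{z_i}}=(1-|z_i|^2)\,\Theta'(z_i)$. Factoring $\Theta=b_{z_i}\prod_{j\neq i}b_{z_j}$ with $b_a(z)=(z-a)/(1-\overline{a}z)$ and differentiating at $z_i$ — where $b_{z_i}$ vanishes and $b_{z_i}'(z_i)=(1-|z_i|^2)^{-1}$ — collapses this to $\prod_{j\neq i}b_{z_j}(z_i)$, so that $|\inner{\vec{x}_i,\overline{\vec{x}_i}}|^2=\prod_{j\neq i}\rho(z_i,z_j)^2$. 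Finally, \eqref{eq-Moduli} and \eqref{eq-DistanceFormula} give $\rho(z_i,z_j)^2=1-|\inner{\vec{x}_i,\vec{x}_j}|^2$ for every pair, including those involving $z_n=0$; substituting this yields exactly \eqref{eq-CSM}. I expect the only genuine obstacle to be organizational: one must carry the eigenvectors of $M^{*}$ (not just of $M$) through $U$ and recognize that $|\inner{\vec{x}_i,\overline{\vec{x}_i}}|$ is the quantity that transports cleanly and reduces to the one-line reproducing-kernel evaluation above.

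\emph{Sufficiency for $n\le 3$.} Here nothing new is required. The case $n=1$ is trivial. If $n=2$, the standing hypotheses (distinct eigenvalues, non-orthogonal eigenvectors) force $M$ to be non-normal, so $M$ is unitarily equivalent to an analytic truncated Toeplitz operator by Corollary~\ref{Corollary2x2}. If $n=3$, then $M$ is complex symmetric with distinct eigenvalues and pairwise non-orthogonal eigenvectors, so Theorem~\ref{Theorem3x3} gives the same conclusion. In fact, for $n\le 3$ every complex symmetric matrix meeting the standing hypotheses automatically satisfies \eqref{eq-CSM} — for $n=3$ it is precisely the combination of \eqref{eq-Choice1}–\eqref{eq-Choice3} proved inside Theorem~\ref{Theorem3x3} — so in these dimensions \eqref{eq-CSM} carries no extra information and the asserted sufficiency is immediate from the earlier results.
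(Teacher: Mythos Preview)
Your argument is correct, and the sufficiency half matches the paper's almost verbatim (both simply note that for $n=3$ the condition \eqref{eq-CSM} coincides with \eqref{eq-Choice1}--\eqref{eq-Choice3} and then invoke Theorem~\ref{Theorem3x3}).

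For necessity you take a genuinely different, and more elementary, route than the paper.  The paper first observes that the eigenvector hypothesis forces $M$ to be irreducible, then uses irreducibility to prove a structural fact: the canonical conjugation $J$ on $\C^n$ (coming from $M=M^t$) and the model conjugation $C$ on $\K_\Theta$ must satisfy $UJU^*=\gamma C$ for some unimodular $\gamma$, since irreducibility pins down the conjugation implementing $T=CT^*C$ uniquely up to a scalar.  From this the paper reads off $|\inner{\vec{x}_i,J\vec{x}_i}|=|\inner{\tilde{k}_{z_i},C\tilde{k}_{z_i}}|=|\inner{\tilde{k}_{z_i},k_{z_i}}|$.  You instead bypass the conjugation machinery entirely: since $M=M^t$ gives $M^*=\overline{M}$, the vector $\overline{\vec{x}_i}$ is a unit eigenvector of $M^*$ for $\overline{\lambda_i}$, and since $U$ intertwines $M^*$ with $(A_\phi^\Theta)^*$ and the latter has one-dimensional eigenspaces spanned by the $k_{z_j}$, you get $U\overline{\vec{x}_i}=\beta_i k_{z_i}$ directly.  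Your approach is shorter and avoids the detour through irreducibility and uniqueness of conjugations; the paper's approach, on the other hand, establishes the intertwining relation $UJU^*=\gamma C$ as a byproduct, which is a statement of independent interest and explains \emph{why} $\overline{\vec{x}_i}$ lands on $k_{z_i}$ rather than merely verifying that it does.  Both routes end at the same reproducing-kernel computation $|\inner{\tilde{k}_{z_i},k_{z_i}}|^2=(1-|z_i|^2)^2|\Theta'(z_i)|^2=\prod_{j\neq i}\rho(z_i,z_j)^2$, and both finish via \eqref{eq-DistanceFormula}.
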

	
	\begin{proof}	
		Maintaining the notation and conventions of Theorem \ref{TheoremMain} and its proof, let
		\begin{equation*}
			\Theta(z) = \prod_{i=1}^n \frac{z - z_i}{1 - \overline{z_i}z } 
		\end{equation*}
		and observe that 	
		\begin{equation} \label{eq-product-formula}
			\prod_{\substack{j=1\\j\neq i}}^n (1 - | \inner{\vec{x}_i,\vec{x}_j}|^2)
			= \prod_{\substack{j=1\\j\neq i}}^n \rho^2(z_i,z_j)
			= (1 - |z_i|^2)^2|\Theta'(z_i)|^2
		\end{equation}
		holds by \eqref{eq-DistanceFormula}.
		Next note that the hypothesis upon the eigenvectors of $M$ implies that $M$ is irreducible.
		Since $M = M^t$, it follows that $M = JM^*J$ where $J$ denotes the canonical conjugation 
		\begin{equation*}
			J(\zeta_1,\zeta_2,\ldots,\zeta_n) = (\overline{\zeta_1} , \overline{ \zeta_2}, \ldots, \overline{ \zeta_n})
		\end{equation*}
		on $\C^n$.  If $J'$ is another conjugation
		which satisfies $M = J'M^*J'$, then $JJ'$ is unitary and commutes with $M$ whence $JJ'$ is a multiple of the identity.
		Thus $J$ is the unique conjugation, up to a unimodular constant factor, which satisfies $M = JM^tJ$.
		
		Now suppose that $UM = A_{\phi}^{\Theta}U$ for some unitary $U:\C^n \to \K_{\Theta}$.
		Since $A_{\phi}^{\Theta}$ is irreducible, a similar argument shows that 
		the conjugation $C:\K_{\Theta} \to \K_{\Theta}$ defined by $Cf = \overline{fz}\Theta$
		is the unique conjugation, up to a unimodular constant factor, such that
		$A_{\phi}^{\Theta} = C(A_{\phi}^{\Theta})^*C$ \cite{CCO,CSOA,Sarason}.  Since
		$A_{\phi}^{\Theta} = (UJU^*)(A_{\phi}^{\Theta})^*(UJU^*)$ and $UJU^*$ is a conjugation on $\K_{\Theta}$, it follows that
		$UJU^* = \gamma C$ for some unimodular constant $\gamma$.  This yields
		\begin{align*}
			| \inner{ \vec{x}_i,  \overline{\vec{x}_i}}|^2
			&= |\inner{\vec{x}_i,J\vec{x}_i}|^2 
			= | \inner{ U\vec{x}_i , UJ\vec{x}_i}|^2  \\
			&= | \inner{ U\vec{x}_i , CU\vec{x}_i}|^2 
			=| \inner{ \tilde{k}_{z_i}, k_{z_i} } |^2 \\
			&= (1 - |z_i|^2)^2 | \Theta'(z_i)|^2 \\
			&= \prod_{j=1}^n (1 - | \inner{ \vec{x}_j , \vec{x}_i } |^2)
		\end{align*}
		by \eqref{eq-product-formula}.
		
		To see that \eqref{eq-CSM} is sufficient in the $3 \times 3$ case, simply observe that 
		\eqref{eq-CSM} implies \eqref{eq-Choice1}, \eqref{eq-Choice2}, and \eqref{eq-Choice3}.
		In other words, \eqref{eq-CSM} implies that \eqref{eq-Crazy} holds whence $M$ is unitarily
		equivalent to an analytic truncated Toeplitz operator by Theorem \ref{Theorem3x3}.		
	\end{proof}

\section{Open problems}\label{SectionOpen}	
	Although there has been a recent surge in activity devoted to truncated Toeplitz operators
	under unitary equivalence \cite{CRW, TTOSIUES , STZ}, there are still many basic questions
	left unanswered.  We conclude this note with a series of open problems suggested by this work.
	
	\begin{Question}\label{QuestionMain}
		Is every complex symmetric matrix $M \in {\bf M}_n(\C)$ unitarily equivalent to a direct sum of truncated Toeplitz operators?
	\end{Question}	
	
	In other words, are truncated Toeplitz operators the basic building blocks of complex symmetric operators?	
	For $n \geq 4$
	numerical evidence strongly suggests that the implication (i) $\Leftrightarrow$ (ii)
	of Theorem \ref{Theorem3x3} fails generically.  In particular, \eqref{eq-CSM}
	tends to fail for all $i=1,2,\ldots,n$ for randomly generated complex symmetric matrices $M \in {\bf M}_n(\C)$ which satisfy the 
	hypotheses of Corollary \ref{CorollaryCSM}.  
	On the other hand, nothing that we know of prevents such a matrix from being unitarily equivalent to a truncated 
	Toeplitz operator with symbol in $L^{\infty}$ (as opposed to $H^{\infty}$). 

	\begin{Question}
		Let $n\geq 4$.  Is every complex symmetric matrix $M \in {\bf M}_n(\C)$
		having no pair of orthogonal, nonzero eigenvectors unitarily equivalent to a truncated Toeplitz operator?
	\end{Question}
	
	A variant of the preceding is:

	\begin{Question}
		Let $n\geq 4$.  Is every irreducible complex symmetric matrix $M \in {\bf M}_n(\C)$
		unitarily equivalent to a truncated Toeplitz operator?
	\end{Question}

	Recently, first author and J.~Tener \cite{UET} showed that every complex symmetric
	matrix is unitarily equivalent to a direct sum of (i) irreducible complex symmetric matrices 
	or (ii) matrices of the form $A \oplus A^t$ where $A$ is irreducible
	and not UECSM (such matrices are necessarily $6 \times 6$ or larger).  This immediately 
	suggests the following question.

	\begin{Question}\label{QuestionTranspose}
		For $A \in {\bf M}_n(\C)$, is 
		the matrix $A \oplus A^t \in {\bf M}_{2n}(\C)$ unitarily equivalent to a direct sum of truncated
		Toeplitz operators?
	\end{Question}
	
	Let $S$ denote the unilateral shift and recall that $S$ is not a complex symmetric operator
	\cite[Ex.~2.14]{CCO}, \cite[Prop.~1]{CSOA}, \cite[Thm.~4]{SNCSO}, \cite[Cor.~7]{MUCFO}.
	On the other hand, $T = S^* \oplus S$ is a complex symmetric operator \cite[Ex.~5]{G-P-II} which
	appears to be a promising candidate for a counterexample to Question \ref{QuestionTranspose}
	in the infinite-dimensional setting.

	One method for producing complex symmetric matrix representations of a given truncated Toeplitz
	operator is to use \emph{modified Aleksandrov-Clark bases} for $\K_{\Theta}$.  We refer the
	reader to \cite[Sect.~2.3, 5.2]{NLEPHS} for specific details.

	\begin{Question}
		Suppose that $M \in {\bf M}_n(\C)$ is complex symmetric.  If $M$
		is unitarily equivalent to a truncated Toeplitz operator, does there exist an inner function $\Theta$,
		a symbol $\phi \in L^{\infty}$, and a modified Aleksandrov-Clark basis $\beta$ for $\K_{\Theta}$
		such that $M$ is the matrix representation of $A_{\phi}^{\Theta}$ with respect to the basis $\beta$?
	\end{Question}

	In other words, do all such unitary equivalences between complex symmetric matrices and truncated
	Toeplitz operators arise essentially from Aleksandrov-Clark representations?

\bibliography{UEATTO}

\end{document}